\author[1]{Alvaro de Diego Unanue}
\author[2]{Gary Froyland}
\author[1]{Oliver Junge}
\author[3]{P\'eter Koltai}
\affil[1]{Department of Mathematics, TUM School of Computation, Information and Technology, Technical University of Munich, 80333 Munich, Germany}
\affil[2]{School of Mathematics and Statistics,
University of New South Wales, Sydney NSW 2052, Australia}
\affil[3]{Department of Mathematics,  University of Bayreuth, 95440 Bayreuth, Germany}
\newtheorem{thm}{Theorem}[section]
\newtheorem{lem}{Lemma}[section]
\newtheorem{rem}{Remark}[section]
\newtheorem{prop}{Proposition}[section]
\newcommand\torpdf[2]{\texorpdfstring{#1}{#2}}
\newcommand\R{\mathbb{R}}
\newcommand\spn{\textrm{span}}
\DeclareMathOperator{\grad}{grad}
\newcommand\<{\langle}
\renewcommand\>{\rangle}
\newcommand\dynamic[1]{{#1}^D}
\newcommand\subsetname{A}
\newcommand{\sobolevdual}{W^{1,p}_0(M)^*}
\title{A dynamic \torpdf{$p$}{p}-Laplacian}
\begin{document}
\maketitle
\begin{abstract}
    We generalise the dynamic Laplacian introduced in (Froyland, 2015)
    to a dynamic $p$-Laplacian, in analogy to the generalisation of the standard $2$-Laplacian to  the
    standard $p$-Laplacian for $p>1$. Spectral properties of the dynamic Laplacian are connected to the geometric problem of finding ``coherent'' sets with persistently small boundaries under dynamical evolution, and we show that the dynamic $p$-Laplacian shares similar geometric connections. 
    In particular, we prove that the first eigenvalue of the dynamic $p$-Laplacian with Dirichlet boundary conditions exists and converges to a dynamic version of the Cheeger constant introduced in (Froyland, 2015)
    as~$p\rightarrow 1$.
    We develop a numerical scheme to estimate the leading eigenfunctions of the (nonlinear) dynamic $p$-Laplacian, and through a series of examples we investigate the behaviour of the level sets of these eigenfunctions. 
    These level sets define the boundaries of sets in the domain of the dynamics that remain coherent under the dynamical evolution.
\end{abstract}

\clearpage

\section{Introduction}

Time-dependent or non-autonomous dynamical systems frequently exhibit complex, turbulent motion;  prominent examples are geophysical flows in the atmosphere and ocean, industrial fluid mixers, and the dynamics of biofluids.
These flows often display heterogeneous behaviour, i.e.\ there exist regions of fluid that are being \emph{coherently} transported and slowly dispersed relative to the surrounding fluid.  
There is a large literature on the identification of coherent structures of one sort or another.
Our interest is primarily in \emph{geometric} characterisations of mixing.

In \cite{froyland_dynamic_2015}, Froyland proposed to characterise a \emph{coherent set} by a small average \emph{evolved} boundary area to volume ratio.
If this ratio is high for a particular region of fluid, it is likely that this fluid region becomes highly filamented over a finite flow time and therefore is strongly susceptible to small-scale diffusive processes.
Indeed, tracking the growth in irregularity of fluid interfaces is well-established as a means of quantifying mixing properties of fluids \cite{aref1984,sturman2006,haller2013coherent,aref2017}.
Such a characterisation extends classical (non-dynamic) notions from isoperimetric theory, whereby the \emph{Cheeger ratio} of a set is the ratio of its boundary area (boundary codimension-one volume) to its enclosed volume;  formally, the infimand on the right-hand side of~\eqref{eq:cheeger}.
In the characterisation of \cite{froyland_dynamic_2015}, coherent sets have a small \emph{dynamic Cheeger ratio}, defined formally in~\eqref{eq:dynCheegerratio}. 
The problem of finding sets with small dynamic Cheeger ratio can be transformed into finding eigenfunctions of a certain Laplace--Beltrami operator, called \emph{dynamic Laplacian}~\cite{froyland_dynamic_2015}.

To set up our contributions in this work, we recap some classical isoperimetric theory. Throughout the paper (except for one of the examples), we consider a compact 
$d$-dimensional manifold $M\subset \mathbb{R}^d$ with nonempty Lipschitz boundary. Classically, the \emph{Cheeger constant} of $M$ is defined as \cite{chavel_isoperimetric_2001}
\begin{equation}
\label{eq:cheeger}
    h(M) := \inf_{\subsetname\subset M}
        \frac{\ell_{d-1}(\partial \subsetname)}{\ell_{d}(\subsetname)},
\end{equation}
where the infimum is taken over all open submanifolds $A$  of $M$ with $C^\infty$ boundary and compact closure and $\ell_d$ denotes  $d$-dimensional volume. 

If a set $\subsetname$ has a Cheeger ratio of $h(M)$, it is called
a \emph{Cheeger set}. Note that a Cheeger set does not have to have $C^\infty$ boundary and might intersect the boundary of~$M$. 

The Federer--Fleming theorem~\cite{federer1960normal, maz1960classes} provides a tight connection between $h(M)$ and the functional \emph{Sobolev constant} defined by an infimum taken over $C^\infty$ functions $u:M\to \mathbb{R}$ with compact support\footnote{For a vector-valued function $v$, we denote by $\| v \|_p$ the $L^p$ norm of the pointwise Euclidean length $|v|$ of~$v$.}
\begin{equation}
    s(M):=\inf_{u\in C^\infty_c(M)}\frac{\|\nabla u\|_1}{\|u\|_1},
    \label{eq:sobolev-constant}
\end{equation}
namely $h(M)=s(M)$ \cite[Remark 2.1]{leonardi2015overview}.
One may think of the supports of an infimising sequence of functions $u$ in \eqref{eq:sobolev-constant} defining an infimising sequence of sets $\subsetname$ in~\eqref{eq:cheeger}.
Replacing the 1-norm by the 2-norm in \eqref{eq:sobolev-constant} produces the related minimisation problem
\begin{equation}
    \label{eq:eigval}
    \lambda_2 = \inf_{u\in C^\infty_c(M)}\frac{\|\nabla u\|_2^2}{\|u\|_2^2}.
\end{equation}
In fact, $\lambda_2>0$ is the leading eigenvalue\footnote{Later we will denote by $\lambda_p^{(k)}$ the $k^{\rm th}$ eigenvalue of the $p$-Laplacian.  For now, we suppress the superscript when $k=1$.} of the (negative) 2-Laplace operator on $M$ with homogeneous Dirichlet boundary conditions, i.e.\ $u=0$ on $\partial M$ \cite[Rayleigh's Theorem]{chavel1984eigenvalues}.  
A connection between the two problems is given by the well-known \emph{Cheeger inequality} 
\begin{equation}
    \label{eq:cheeger_ineq}
h(M)\le 2\sqrt{\lambda_2}\,.
\end{equation}
Informally one may think of the Laplace eigenfunction $u_2$  corresponding to $\lambda_2$ as a smoothed version of the limit of an infimising sequence of functions $u$ in~\eqref{eq:sobolev-constant}, and one of the superlevel sets of $u_2$  as a smoothed version of the limit of an infimising sequence of sets $\subsetname$ in~\eqref{eq:cheeger}. 

Dynamic counterparts of these constructions (for the case of Neumann boundary conditions) have been developed in \cite{froyland_dynamic_2015},  and are recalled in \cref{sec:prelims}.  The particular expressions for the Dirichlet boundary condition case, \eqref{eq:dynCheeger}--\eqref{eq:dynamic_Laplace}, may be found in~\cite{froyland_robust_2018}.  
As in the static case, a suitable  superlevel set of the leading eigenfunction  of the dynamic Laplacian yields a smoothed version of the set of interest, namely a set with small averaged isoperimetric ratio, i.e.\ a \emph{coherent set}.  
Multiple coherent sets can be found directly from suitable  superlevel sets of eigenfunctions at higher eigenvalues of the dynamic Laplacian~\cite{froyland_dynamic_2015,BaKo17,froyland_robust_2018,FrRoSa19}.  Computationally, the eigenproblem for the dynamic Laplacian can efficiently be solved by the finite element method \cite{froyland_robust_2018}, leading to a scheme for which only trajectory data is required. 

While the use of the eigenfunctions is computationally advantageous on the one hand, it is not \emph{a priori} clear how well the level sets of these eigenfunctions approximate solutions of the original problem \eqref{eq:cheeger}.
On the other hand, while it is desirable to directly solve the original formulation using the 1-norm,  the solutions to this problem are in general not smooth and typically certain regularisations of the problem have to be considered~\cite{feng_analysis_2003}.
Instead of regularising the $p=1$ eigenproblem, one can try to solve a (nonlinear) eigenproblem for a $p$-norm with~$ 1\lesssim p\leq 2 $;  this is the approach we pursue here. 
We show that the 2-norm constructions of \cite{froyland_dynamic_2015} can be generalised to $p$-norm versions, obtaining a $p$-norm version of the dynamic Sobolev constant and a dynamic $p$-Laplacian.
We prove a dynamic $p$-Cheeger inequality for the leading eigenvalue of the (nonlinear) dynamic $p$-Laplacian (Theorem~\ref{thm:cheegerdynamic}), generalising the dynamic Cheeger inequality of~\cite{froyland_dynamic_2015}.
In Theorem~\ref{thm:cheegerconv} we show that in the limit $p\to 1$ this dynamic $p$-Cheeger inequality interpolates to the dynamic Federer--Fleming equality of~\cite{froyland_dynamic_2015}.
Thus, for $p$ close to 1, one expects most level sets of the dynamic $p$-Laplacian to have a small dynamic Cheeger ratio (relative to other sets) and therefore to represent a relatively coherent set.

The numerical computation of the eigenfunctions of the (dynamic) $p$-Laplacian becomes more challenging as $p$ comes closer to $1$.  
We employ a scheme proposed by Yao and Zhou \cite{yao_numerical_2007} for a certain general class of nonlinear eigenproblems. 
Our numerical experiments in \cref{sec:experiments} allow us to compare coherent sets that result from  superlevel sets of eigenfunctions of the dynamic $p$-Laplacian for $p$ close to 1 with those for $p=2$.  
In all cases, we find the dynamic $p$-Laplacian eigenfunctions have profiles closer to indicator functions than the smooth profile of the dynamic $2$-Laplacian. 
Despite this, the difference between the level sets of the eigenfunctions that minimise their Cheeger ratios in the $1\lesssim p$ and $p=2$ cases is small, indicating that the coherent sets  obtained through level sets of the dynamic 2-Laplacian are indeed accurate estimates of solutions of the dynamic Cheeger problem.

\section{Preliminaries}
\label{sec:prelims}
\subsection{The dynamic isoperimetric problem}
As alluded to in the introduction, dynamic versions of \eqref{eq:cheeger} and \eqref{eq:sobolev-constant} can be constructed \cite{froyland_dynamic_2015} (see \cite{froyland_robust_2018} for the case of Dirichlet boundary data as used below).  The underlying dynamical system is defined by a volume-preserving diffeomorphism\footnote{In \eqref{eq:dynamic_Laplace} below, from a classical point of view, we need to assume $T$ to be a $C^2$-diffeomorphism. Since we always view $\Delta$ as mapping into a dual space (see \eqref{eq:formulaplaplacian} and for the dynamic version~\eqref{eq:dyn_p_Laplace}), however, $C^1$ regularity of $T$ is actually sufficient.} $T:M\rightarrow M$, which in applications is often given by the flow map of a conservative vector field over some finite time span. We denote by $T_*:u\mapsto u\circ T^{-1}$, $u\in L^p(M)$, the \emph{pushforward operator} corresponding to $T$. Since we assume $T$ to be volume-preserving, $T_*$ coincides with the \emph{transfer} (or \emph{Perron--Frobenius}) {operator} of $T$. 
Its adjoint is $T^*$, the Koopman operator, with $T^*:u\mapsto u\circ T$, $u\in L^q(M)$.

For a single application of $T$, the 
\emph{dynamic Cheeger ratio} of a subset $A\subset M$ is 
    \begin{equation}
    \label{eq:dynCheegerratio}
            \frac{\ell_{d-1}(\partial \subsetname) +
                  \ell_{d-1}(T(\partial \subsetname))}{2\ell_d(\subsetname)},
    \end{equation}
    and the \emph{dynamic Cheeger constant} of $M$ is 
    \begin{equation}
    \label{eq:dynCheeger}
        \dynamic{h}(M, T):=
            \inf_{\subsetname\subset M}
            \frac{\ell_{d-1}(\partial \subsetname) +
                  \ell_{d-1}(T(\partial \subsetname))}{2\ell_d(\subsetname)},
    \end{equation}
    where the infimum is taken over open submanifolds $\subsetname$ with compact closure and $C^{\infty}$ boundary. 
    Similarly, the \emph{dynamic Sobolev constant}  is 
    \begin{equation}
    \label{eq:dynamic_sobolev_constant}
    \dynamic{s}(M,T):=
                  \inf_{u\in C^\infty_c(M)}
                 \frac{\|\nabla u \|_1 +
                       \|\nabla T_*u\|_1}
                      {2\|u\|_1}.
     \end{equation}

As in the static case, one has a dynamic Federer--Fleming theorem, i.e.\  equality of these two quantities (\cite{froyland_dynamic_2015} for the Neumann case, \cite{froyland_robust_2018} for the Dirichlet case):
$\dynamic{h}(M,T) = \dynamic{s}(M,T)$.
Modifying the norm in problem \eqref{eq:dynamic_sobolev_constant} as in the static case yields the minimisation problem
\begin{equation}
    \dynamic{\lambda}_2 =
    \inf_{u\in C^\infty_c(M)}  
    \frac{\|\nabla u \|_2^2 + \|\nabla T_*u\|_2^2} {2\|u\|_2^2}.
\end{equation}
Also like in the static case, $\dynamic{\lambda}_2 > 0$ is the leading eigenvalue of the (negative) \emph{dynamic Laplace operator} \cite{froyland_dynamic_2015}
\begin{equation}
\label{eq:dynamic_Laplace}
    \dynamic{\Delta} := \frac12 \left(\Delta + T^*\Delta T_*\right)
\end{equation}
on $M$ subject to homogeneous Dirichlet boundary conditions \cite{froyland_robust_2018}.  A suitable  superlevel set of an eigenfunction of $\dynamic{\Delta}$ at $\dynamic{\lambda}_2$ yields a smoothed version of the limit of an infimising sequence of sets $\subsetname$ in \eqref{eq:dynCheeger}, i.e.\ a coherent set.

\subsection{1-norm minimisation versus 2-norm minimisation}
\label{sec:effect_of_relaxation}

In order to illustrate the effect of the transition from the $1$-norm to the $2$-norm in \eqref{eq:sobolev-constant}, we consider a minimiser of the static problem on the unit square~$M=[0,1]^2$ (more precisely, the limit
of an infimising sequence).
It can be shown (see e.g. \cite{kawohl_isoperimetric_2003}) that
almost all of its level sets coincide with the minimiser of the Cheeger problem. 
For this simple domain, one can explicitly calculate the Cheeger constant and the Cheeger set (e.g.\ \cite{horak_numerical_2011}):
\[
    h([0,1]^2) = \frac{4-\pi}{2-\sqrt{\pi}}\approx 3.772.
\]
The Cheeger set is a square with rounded corners of radius $1/h([0,1]^2) \approx 0.265$ touching the boundary; see e.g.\ \cite[Remark 5]{kawohl_isoperimetric_2003} and Figure \ref{fig:L1-minimizer}.

\begin{figure}[H]
\centering
    \includegraphics[width=0.25\textwidth]{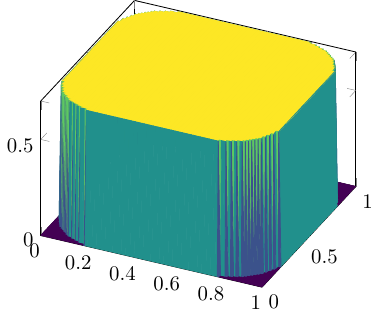}\qquad
    \includegraphics[width=0.22\textwidth]{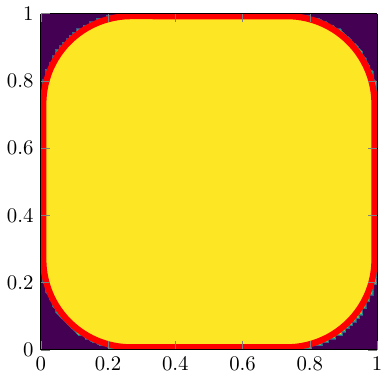}
\caption{Left: minimiser of \eqref{eq:sobolev-constant}. Right: corresponding minimiser of \eqref{eq:cheeger}. The red curve delimits the Cheeger set. }
\label{fig:L1-minimizer} 
\end{figure}
In contrast, if we alter the $1$-norm to the $2$-norm, the corresponding minimiser $u(x,y)=\sin(\pi x y)$ is quite smooth and its level sets vary strongly. 
Nevertheless, at least in this simple example, the optimal  superlevel set -- with boundary shown in red in Figure ~\ref{fig:L2-minimizer} -- is close to the minimiser of \eqref{eq:cheeger}.
Moreover, the Cheeger ratio of this  superlevel set is approximately $3.890$, which is close to the exact minimal value of around $3.772$.

\begin{figure}[H]
\centering
    \includegraphics[width=0.25\textwidth]{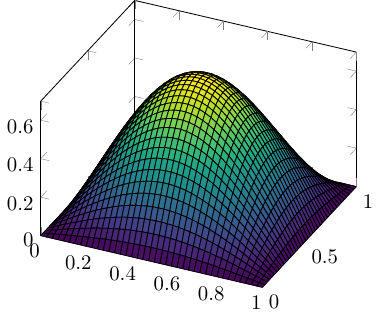}\qquad
    \includegraphics[width=0.22\textwidth]{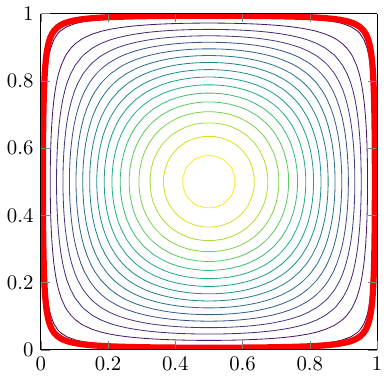}
\caption{Graph of $u(x,y)=\sin(\pi x y)$, the minimiser of $\frac{\|\nabla u\|_2^2}{\|u\|_2^2}$, $u|_{\partial M} = 0$ (left) and the level sets (right). The red curve is the level set of $u$ with optimal Cheeger ratio.}
\label{fig:L2-minimizer}
\end{figure}

Our aim in the sequel is to consider the norms $\|\cdot\|_p$ as  $p\in (1, 2)$ and to investigate what happens for~$p\rightarrow 1$.
Ideally, the minimisers become flatter away from the optimal level set, while the level sets get closer to the optimum level set, so that the optimal level sets can be more robustly extracted. 
We also anticipate that the infimum of the Rayleigh quotient with respect to the $p$-norms converges to~$h(M)$.
In the classical (static) setting the above behaviour of the level sets and the convergence of the Rayleigh quotient can be proven (see e.g. \cite{kawohl_isoperimetric_2003}). Further, the \enquote{flatness} of the eigenfunctions for $p$ close to $1$ can be observed in numerical solutions (see e.g. \cite[Figure 8]{horak_numerical_2011}). However, no such attempt has been made so far for a functional involving dynamics. 

\subsection{The \torpdf{$p$}{p}-Laplacian}
\label{sec:plaplacian}

In this section we recall results on the consequences of replacing
the $1$-norm by the $p$-norm in \eqref{eq:sobolev-constant}.
The minimiser can then be found by computing eigenfunctions of the 
so-called \emph{$p$-Laplacian} (see e.g. the introduction of \cite{lindqvist_nonlinear_2008}). In the limit $p\rightarrow 1$, these eigenfunctions allow us to recover solutions
of the Cheeger problem \eqref{eq:cheeger} (see \Cref{thm:plaptoone} below). 
We will give a short overview of the definition of the $p$-Laplacian and some
known results that link it to the Cheeger problem. 
In what follows we always assume that 
$1 < p < 2$ and denote by $q$ the conjugate of $p$
satisfying~$\smash{ 1/p + 1/q = 1 }$.

Let $W^{1,p}_0(M)$ denote the classical Sobolev space of once weakly differentiable $L^p$ functions on $M$ that are zero on the boundary and have weak first derivatives in~$L^p$. Consider the functional~$F:W_0^{1,p}(M)\to\mathbb R$,
\[
    F(u):= \|\nabla u\|_p^p.
\]
Its (Fréchet) derivative $F'(u)$ at some $u\in W^{1,p}_0(M)$ is an element of the dual space of $W^{1,p}_0(M)$, which is denoted by $\sobolevdual$.
The action of $F'(u)$ on a function $v\in W^{1,p}_0(M)$ can be calculated by $F'(u)v = \lim_{h\to 0} \frac1h (F(u+hv) - F(u))$ to be
\[
F'(u)v = p\int_M \nabla u |\nabla u|^{p-2}\nabla v.
\]
The (classical) $p$-Laplacian $\Delta_p:W_0^{1,p}(M)\rightarrow \sobolevdual$ 
is defined as the nonlinear operator ${\Delta_p u :=-\frac1pF'(u)}$, thus
\begin{equation}
    \label{eq:formulaplaplacian}
    (\Delta_p u)v =
    -\int_M \nabla u |\nabla u|^{p-2}\nabla v.
\end{equation}
If $u$ is sufficiently regular, then the functional $\Delta_p u$ can even be represented by a function; in a similar fashion to Riesz duality between $L^p$ and~$L^q$.   
    Thus, identifying the integral operator with its integral kernel, we sometimes write
\[
    \Delta_p u = \mathrm{div}(|\nabla u|^{p-2}\nabla u)
\]
for $u$ regular enough (meaning the right-hand side is in~$L^q(M)$). This \enquote{strong form} of the operator $\Delta_p$ will be intuitive for the eigenvalue problems we introduce below.  For rigorous derivations we always consider the \enquote{weak form}~\eqref{eq:formulaplaplacian}. For $p=2$, the $p$-Laplacian coincides with the standard Laplacian.

We will mainly be  interested in the variational problem
\begin{equation}
    \min F(u) \quad\text{such that} \quad G(u):= \|u\|_p^p = 1.
    \label{eq:plapopt}
\end{equation}
The Euler--Lagrange equation of \eqref{eq:plapopt} is
\[
    F'(u) = \lambda G'(u).
\]
As the derivative of $G$ is $G'(u)v  = p\int_M u|u|^{p-2}v$, the expression immediately above becomes
\begin{equation}
    \int_M\nabla u|\nabla u|^{p-2}\cdot \nabla v =
    \lambda \int_M u|u|^{p-2}v\label{nlevp2}
    \quad \text{for all } v \in W_0^{1,p}(M),
\end{equation}
or in its strong version
\begin{equation}
    -\Delta_p u = \lambda u|u|^{p-2} \label{nlevp}.
\end{equation}
If a pair $(\lambda, u)\in\R\times W_0^{1,p}(M)$ solves \eqref{nlevp2}, we call $\lambda$ an eigenvalue of $-\Delta_p$ and $u$ an eigenfunction.  In contrast to the linear case $p=2$, where a plethora of results is available, the set of eigenvalues of the $p$-Laplacian is not fully known \cite{parini_second_2010, lindqvist_nonlinear_2008}.  One can construct a subset of it by a min-max-principle based on the Krasnoselskii genus \cite{parini_second_2010}, but it is not known whether these exhaust the spectrum of $\Delta_p$. 

In our case, we will be mostly concerned with the smallest
eigenvalue, which corresponds to the minimum of \eqref{eq:plapopt}.  
Since $F$ and $G$ have the same order $p$ of homogeneity,  one can equivalently search for a minimum of the functional $J:W_0^{1,p}(M)\backslash\{0\}\to\R$,
\[
    J(u) = \frac{F(u)}{G(u)}.
\]
\begin{thm}{\cite[Lemma 5]{lindqvist_nonlinear_2008}}
    Let $d\geq2$, $1<p<2$ and $M \subseteq \mathbb{R}^d$ a
    bounded manifold of dimension $d$ with Lipschitz boundary. 
    Then
    \[
        \lambda_p := \inf_{u\in W_0^{1,p}(M) \setminus \{0\}}  J(u)
    \]
    is positive and achieved by a function
    $u_p\in W_0^{1,p}(M)$ satisfying
    \eqref{nlevp2} with $\lambda=\lambda_p$. 
    \label{thm:variationalplap}
\end{thm}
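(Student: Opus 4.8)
The plan is to use the direct method of the calculus of variations. First I would establish positivity of $\lambda_p$ via the Poincar\'e inequality: since $M$ is a bounded Lipschitz domain, there is a constant $C = C(M,p) > 0$ with $\|u\|_p \le C\|\nabla u\|_p$ for every $u \in W_0^{1,p}(M)$. This gives $J(u) = \|\nabla u\|_p^p/\|u\|_p^p \ge C^{-p}$ for all nonzero $u$, hence $\lambda_p \ge C^{-p} > 0$.

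For the existence of a minimiser I would take a minimising sequence $(u_n) \subset W_0^{1,p}(M)$ and, exploiting the $p$-homogeneity of both $F$ and $G$, normalise so that $G(u_n) = \|u_n\|_p^p = 1$ while $F(u_n) = \|\nabla u_n\|_p^p \to \lambda_p$. Then $(u_n)$ is bounded in $W_0^{1,p}(M)$. Because $1 < p < 2$ the space is reflexive, so after passing to a subsequence $u_n \rightharpoonup u_p$ weakly in $W_0^{1,p}(M)$. The crucial compactness input is the Rellich--Kondrachov theorem: since $p < 2 \le d$ forces $p < d$, the embedding $W_0^{1,p}(M) \hookrightarrow L^p(M)$ is compact, so $u_n \to u_p$ strongly in $L^p(M)$. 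This has two consequences. The constraint passes to the limit, $\|u_p\|_p^p = 1$, so in particular $u_p \neq 0$; and, since $F$ is convex and continuous, hence weakly lower semicontinuous, $F(u_p) \le \liminf_n F(u_n) = \lambda_p$. As $J(u_p) \ge \lambda_p$ by definition of the infimum and $G(u_p) = 1$, I conclude $F(u_p) = \lambda_p$, so $u_p$ attains the infimum.

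Finally, to derive the Euler--Lagrange equation \eqref{nlevp2} I would invoke the Lagrange multiplier rule. Both $F$ and $G$ are continuously Fr\'echet differentiable on $W_0^{1,p}(M)$, with derivatives as computed in \cref{sec:plaplacian}, and $G'(u_p) \neq 0$ because $u_p \neq 0$. Hence there is a $\lambda \in \R$ with $F'(u_p) = \lambda G'(u_p)$, which is exactly \eqref{nlevp2}. Testing this identity against $v = u_p$ and using the homogeneity relations $F'(u_p)u_p = pF(u_p)$ and $G'(u_p)u_p = pG(u_p)$ then identifies the multiplier as $\lambda = F(u_p)/G(u_p) = \lambda_p$.

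The main obstacle is the compactness step. Weak convergence in $W_0^{1,p}(M)$ alone does not preserve the normalisation $\|u_n\|_p = 1$ and could in principle allow the $L^p$-mass to escape, leaving $u_p = 0$ and invalidating the whole argument. It is precisely the compact Sobolev embedding---available here because $p < d$---that rules this out, and at the same time, through weak lower semicontinuity of the convex gradient functional $F$, yields the inequality $F(u_p) \le \lambda_p$. Once this is secured, the remaining steps are routine.
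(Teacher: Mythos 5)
Your proposal is correct and follows essentially the same route the paper uses for the analogous dynamic result (\Cref{thm:dyn_eigenvalue}): positivity via the Poincar\'e inequality, the direct method with a normalised minimising sequence, Rellich--Kondrachov compactness to keep the constraint $\|u_p\|_p^p=1$ in the limit, and weak lower semicontinuity of the convex gradient functional. The only cosmetic difference is at the end, where you invoke the Lagrange multiplier rule (checking $G'(u_p)\neq 0$) while the paper sets $(J)'(u_p)=0$ and expands the quotient rule --- these are equivalent, and your identification $\lambda=\lambda_p$ via the homogeneity relations $F'(u_p)u_p=pF(u_p)$, $G'(u_p)u_p=pG(u_p)$ is sound.
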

\begin{thm}{\cite[Lemma 5]{lindqvist_nonlinear_2008}} 
\label{thm:positivity}
Let $d\geq 2$, $1<p<2$ and $M\subset \mathbb{R}^d$ a bounded 
manifold of dimension $d$ with Lipschitz boundary. If $u\in W^{1,p}_0(M)$ is a solution of \eqref{eq:plapopt} then $u$ is also a solution of \eqref{nlevp2}. Further, a solution $u$ of \eqref{eq:plapopt}
either fulfils $u>0$ or $u<0$ in the interior of $M$, i.e. it does not change sign. 
\end{thm}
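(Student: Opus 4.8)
The plan is to treat the two assertions separately: first the Lagrange-multiplier step, namely that a minimiser of \eqref{eq:plapopt} solves the weak equation \eqref{nlevp2}, and then the sign result, for which the main tool is the strong maximum principle for the $p$-Laplacian.

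For the first part I would invoke the Lagrange multiplier theorem on the Banach space $W_0^{1,p}(M)$. The functionals $F$ and $G$ are both continuously Fréchet differentiable, with derivatives already computed in the text, $F'(u)v = p\int_M \nabla u|\nabla u|^{p-2}\nabla v$ and $G'(u)v = p\int_M u|u|^{p-2}v$. The constraint set $\{u : G(u) = 1\}$ is a $C^1$-submanifold near any admissible point, since the constraint qualification holds there: testing with $v = u$ gives $G'(u)u = p\|u\|_p^p = p \neq 0$, so $G'(u) \neq 0$ as an element of $\sobolevdual$. Hence at a minimiser $u$ there is a multiplier $\lambda \in \R$ with $F'(u) = \lambda G'(u)$; dividing by $p$ yields exactly \eqref{nlevp2}, so $u$ is an eigenfunction with eigenvalue $\lambda$, and by homogeneity $\lambda = F(u)/G(u) = J(u) = \lambda_p$.

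For the sign result, the key observation is that the problem is invariant under $u \mapsto |u|$. Indeed, for $u \in W_0^{1,p}(M)$ one has $|u| \in W_0^{1,p}(M)$ with $\nabla|u| = \mathrm{sgn}(u)\,\nabla u$ almost everywhere (in particular $\nabla u = 0$ a.e.\ on $\{u = 0\}$), so that $|\nabla|u|| = |\nabla u|$ a.e.\ and therefore $F(|u|) = F(u)$ and $G(|u|) = G(u)$. Consequently $|u|$ is again a minimiser of \eqref{eq:plapopt}, and by the first part it solves \eqref{nlevp2}; in strong form \eqref{nlevp} this reads $-\Delta_p|u| = \lambda_p |u|^{p-1} \geq 0$, so $v := |u| \geq 0$ is a nontrivial weak $p$-supersolution. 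Applying the strong maximum principle for the $p$-Laplacian (Vázquez) then gives $v > 0$ throughout the interior of $M$, i.e.\ $|u| > 0$ there. Finally, interior regularity for $p$-harmonic-type equations ($u \in C^{1,\alpha}_{\mathrm{loc}}$) makes $u$ continuous, and since $|u|$ never vanishes on the (connected) interior, the continuous function $u$ cannot change sign; hence $u > 0$ or $u < 0$ throughout.

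The main obstacle is the sign part, and specifically the appeal to the strong maximum principle: unlike the linear case $p=2$, this is a genuinely nonlinear statement, owing to the degeneracy and singularity of $\Delta_p$ at points where $\nabla u = 0$, and one must quote the appropriate nonlinear version together with the local $C^{1,\alpha}$ regularity needed to pass from \enquote{$|u|>0$} to \enquote{$u$ of one sign}. The Lagrange-multiplier step is comparatively routine, the only subtle points being the verification of the constraint qualification and the fact that $u\mapsto|u|$ preserves both the Dirichlet and the $L^p$ norms, which rests on the Sobolev chain rule and on $\nabla u = 0$ a.e.\ on the zero set of $u$.
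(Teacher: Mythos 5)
The paper itself gives no proof of this statement---it is imported verbatim as \cite[Lemma 5]{lindqvist_nonlinear_2008}---and your argument is essentially the standard proof from that reference: Lagrange multipliers on $W_0^{1,p}(M)$ (with the constraint qualification $G'(u)u = p\neq 0$) to pass from \eqref{eq:plapopt} to \eqref{nlevp2}, the invariance $F(|u|)=F(u)$, $G(|u|)=G(u)$ via $|\nabla|u||=|\nabla u|$ a.e.\ to reduce to a non-negative minimiser, and then strict positivity via the strong maximum principle (equivalently, Harnack's inequality for the $p$-Laplacian) combined with local $C^{1,\alpha}$ regularity to pass from $|u|>0$ to $u$ having one sign. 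Your proof is correct, and you rightly flag the only genuinely nonlinear ingredient (the V\'azquez/Trudinger maximum principle, which replaces the linear theory available at $p=2$) as well as the connectedness of the interior of $M$, without which the sign conclusion would indeed fail.
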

\begin{thm}{\cite[Theorem 6]{lindqvist1990equation}} 
\label{thm:plapuniqueness}
Let $d\geq 2$, $1<p<2$ and $M\subset\mathbb{R}^d$ a bounded manifold of dimension $d$ with Lipschitz boundary and $F(u) := \frac1p\|\nabla u\|_p^p$. Then all minima of $F$ are scalar multiples of each other. 
\end{thm}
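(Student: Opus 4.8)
The statement is to be read in the context of the variational problem \eqref{eq:plapopt}: its ``minima of $F$'' are the constrained minimisers (equivalently, the minimisers of the scale-invariant Rayleigh quotient $J$, i.e.\ the first eigenfunctions of $-\Delta_p$), and the claim is that any two of them differ only by a multiplicative constant. The plan is to use a \emph{hidden convexity} argument. First I would reduce to two \emph{positive} minimisers: by \Cref{thm:positivity} every minimiser solves \eqref{nlevp2} and has constant sign, so after multiplying by $-1$ where necessary and rescaling (using the scale-invariance of $J$) we may assume $u,v>0$ in the interior of $M$ with $\|u\|_p=\|v\|_p=1$. It then suffices to prove $u=v$.

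The core of the argument is the interpolation competitor
\[
    w := \bigl((1-t)\,u^p + t\,v^p\bigr)^{1/p}, \qquad t\in(0,1),
\]
together with the pointwise \emph{hidden convexity} inequality
\begin{equation}
    |\nabla w|^p \;\le\; (1-t)\,|\nabla u|^p + t\,|\nabla v|^p .
    \label{eq:hiddenconvexity}
\end{equation}
To establish \eqref{eq:hiddenconvexity} I would differentiate the identity $w^p=(1-t)u^p+tv^p$, solve for $\nabla w$, and rewrite it in terms of the logarithmic gradients as $\nabla w = w\bigl(\alpha\,\nabla\log u + (1-\alpha)\,\nabla\log v\bigr)$, where $\alpha := (1-t)u^p/w^p$ and $1-\alpha = t v^p/w^p$. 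Applying the triangle inequality for $|\cdot|$ and then the convexity of $r\mapsto r^p$ (valid since $p>1$) collapses the weights and yields \eqref{eq:hiddenconvexity} after using $w^p\alpha\,|\nabla\log u|^p=(1-t)|\nabla u|^p$ and the analogous identity for $v$. Since the positivity of $u,v$ keeps $w$ away from its zeros, one checks $w\in W^{1,p}_0(M)$ via the Sobolev chain rule, so $w$ is an admissible competitor; moreover $\|w\|_p^p=(1-t)\|u\|_p^p+t\|v\|_p^p=1$.

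I would then close the argument by comparing objective values. Integrating \eqref{eq:hiddenconvexity} over $M$ gives $F(w)\le (1-t)F(u)+tF(v)=m$, where $m$ is the common minimal value $F(u)=F(v)$; by minimality $F(w)\ge m$, so equality holds throughout and \eqref{eq:hiddenconvexity} must be an equality almost everywhere. Tracing back the two inequalities, equality in the triangle step forces $\nabla\log u$ and $\nabla\log v$ to be non-negatively parallel, while equality in the convexity of $r\mapsto r^p$ (where $0<\alpha<1$) forces $|\nabla\log u|=|\nabla\log v|$; together these give $\nabla\log u=\nabla\log v$ a.e., i.e.\ $\nabla\log(u/v)=0$. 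On each connected component of $M$ this makes $u/v$ constant, and the normalisation $\|u\|_p=\|v\|_p=1$ then forces $u=v$, as desired.

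I expect the main obstacle to be the sharp equality analysis of \eqref{eq:hiddenconvexity}. Both feeding inequalities are only almost-everywhere statements, so the delicate point is the set where one of the gradients vanishes: one must argue that a.e.\ equality in the combined estimate genuinely yields $\nabla(\log(u/v))=0$ almost everywhere, and not merely on the set where both gradients are nonzero. A secondary technical point, already flagged above, is verifying rigorously that $w$ lies in $W^{1,p}_0(M)$ so that it is a legitimate competitor for \eqref{eq:plapopt}; this is exactly where the positivity guaranteed by \Cref{thm:positivity} is needed to control the powers and quotients appearing in $\nabla w$.
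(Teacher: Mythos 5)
Your argument is correct, and it is worth noting that the paper itself contains no proof of this statement: it is imported wholesale from \cite[Theorem 6]{lindqvist1990equation}. Your route also differs from Lindqvist's original one. Lindqvist works at the level of the Euler--Lagrange equation \eqref{nlevp2}: he tests the equation for $u$ with $\eta=(u^p-v^p)/u^{p-1}$ and the equation for $v$ with the symmetric counterpart (first replacing $u,v$ by $u+\varepsilon,v+\varepsilon$ so that the quotients are admissible test functions, then letting $\varepsilon\to0$), and concludes $\nabla\log u=\nabla\log v$ from pointwise vector inequalities --- essentially a Picone-type identity. Your hidden-convexity argument (in the style of Belloni--Kawohl and Brasco--Franzina) instead works purely variationally with the competitor $w=((1-t)u^p+tv^p)^{1/p}$: it never invokes the PDE, only minimality of $u$ and $v$, which makes the equality analysis shorter and avoids the $\varepsilon$-regularisation of the test functions. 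What Lindqvist's version buys in exchange is a statement about all positive solutions of the eigenvalue equation at $\lambda_p$, not only constrained minimisers; for the first eigenvalue the two classes coincide, so for the theorem as stated the approaches are interchangeable.

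Three details deserve attention. First, the vanishing-gradient worry you flag at the end resolves itself: since $r\mapsto r^p$ is \emph{strictly} convex for $p>1$ and $\alpha\in(0,1)$ a.e.\ by positivity of $u$ and $v$, equality in the Jensen step already forces $|\nabla\log u|=|\nabla\log v|$ a.e., so on the set where one logarithmic gradient vanishes the other does too, and $\nabla\log(u/v)=0$ holds a.e.\ without further case analysis. Second, membership $w\in W^{1,p}_0(M)$ follows cleanly from the observation that $\Phi(s,\tau):=((1-t)|s|^p+t|\tau|^p)^{1/p}$ is convex, positively $1$-homogeneous, hence globally Lipschitz with $\Phi(0,0)=0$, so the chain rule for Lipschitz compositions applies directly. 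Third, your final step implicitly requires $M$ to be connected: with $M$ disconnected, $u/v$ is only locally constant and the conclusion can genuinely fail (minimisers may concentrate on different components). Lindqvist's theorem is stated for bounded \emph{domains}, and the paper inherits this hypothesis silently through the citation; you should state it explicitly.
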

The preceding theorems show that there is a unique non-negative normalised solution to \eqref{eq:plapopt}, satisfying \eqref{nlevp2} with $\lambda= \lambda_p$. 
We call $\lambda_p$ the \emph{first eigenvalue} of $-\Delta_p$ and $u_p$ the \emph{first eigenfunction} of~$-\Delta_p$ on $M$.

We are particularly interested in
the following properties that link
$u_p$ and $\lambda_p$ to the Cheeger problem 
in the limit~$p\rightarrow 1$.
\begin{thm} 
\label{thm:plaptoone} Let $M\subseteq \mathbb{R}^d$ be a bounded manifold of dimension $d$ with Lipschitz boundary. Then in the limit
$p\rightarrow 1$ 
\begin{enumerate}[(a)]
\item  
\label{thm:plaptoone_a}
the first eigenvalue $\lambda_p$ converges to~$h(M)$;
\item  \label{thm:plaptoone_b} 
after possibly passing to a subsequence, 
the first eigenfunction of 
$-\Delta_p$  converges in $L^1$
to a function $u\in BV(M)$ (the space of functions of bounded variation on~$M$) such that the superlevel sets $A_t:=\{x\in M\mid u(x)>t\}$
are either null sets or Cheeger sets of $M$ for almost all~$t>0$; 
\item \label{thm:plaptoone_c} if $M$ has a unique Cheeger set $\subsetname$, then $u_p$ converges
in $L^1$ to a scalar multiple of the characteristic function $\chi_\subsetname$.
\end{enumerate}
\end{thm}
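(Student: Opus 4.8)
The plan is to prove the three parts in order: first establish the convergence of the eigenvalues, and then exploit it to control the eigenfunctions. Throughout I take the eigenfunction $u_p$ from Theorem~\ref{thm:variationalplap} with $u_p\ge 0$ (Theorem~\ref{thm:positivity}).

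For part (a) I would sandwich $\lambda_p$ between two bounds that both tend to $h(M)$. For the upper bound, fix any $u\in C_c^\infty(M)$; since $u$ is admissible for every $p$, minimality gives $\lambda_p\le \|\nabla u\|_p^p/\|u\|_p^p$, and on the bounded domain $M$ dominated convergence yields $\|\nabla u\|_p^p\to\|\nabla u\|_1$ and $\|u\|_p^p\to\|u\|_1$ as $p\to1$, so $\limsup_{p\to1}\lambda_p\le \|\nabla u\|_1/\|u\|_1$; taking the infimum over such $u$ and invoking the Federer--Fleming identity $h(M)=s(M)$ from \eqref{eq:sobolev-constant} gives $\limsup_{p\to1}\lambda_p\le h(M)$. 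For the lower bound I would prove a $p$-Cheeger inequality: applying the coarea formula to $w=u_p^p$ together with the defining estimate $\ell_{d-1}(\partial\{w>t\})\ge h(M)\,\ell_d(\{w>t\})$ from \eqref{eq:cheeger} shows $p\int_M u_p^{p-1}|\nabla u_p|\ge h(M)\|u_p\|_p^p$, and Hölder's inequality with conjugate exponents $p,q$ bounds the left side by $p\|u_p\|_p^{p-1}\|\nabla u_p\|_p$. Rearranging gives $\lambda_p^{1/p}=\|\nabla u_p\|_p/\|u_p\|_p\ge h(M)/p$, hence $\lambda_p\ge (h(M)/p)^p\to h(M)$, closing the sandwich.

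For part (b), normalise $\|u_p\|_p=1$. Hölder's inequality turns the now-bounded quantities $\|\nabla u_p\|_p=\lambda_p^{1/p}$ and $\|u_p\|_p=1$ into a uniform $W^{1,1}(M)$ bound, so by compactness of $W^{1,1}\hookrightarrow L^1$ on the bounded Lipschitz domain $M$ a subsequence $u_{p_n}$ converges in $L^1$, and a.e. after a further extraction, to some $u\in BV(M)$. The delicate point is that the limit must be nontrivial with the correct mass: using the Sobolev embedding $W^{1,1}(M)\hookrightarrow L^{d/(d-1)}(M)$, the sequence is bounded in $L^{d/(d-1)}$ and hence uniformly integrable, and since $u_{p_n}^{p_n}\to u$ almost everywhere with $p_n\to1$, a Vitali argument gives $\|u\|_1=\lim_n\int_M u_{p_n}^{p_n}=\lim_n\|u_{p_n}\|_{p_n}^{p_n}=1$; in particular $u\ne0$. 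I would then pass to the limit in the Rayleigh quotient: lower semicontinuity of the total variation under $L^1$ convergence gives $|Du|(M)\le\liminf_n\|\nabla u_{p_n}\|_1$, and Hölder together with part (a) bounds the right-hand side by $\lim_n \ell_d(M)^{1/q_n}\lambda_{p_n}^{1/p_n}=h(M)$, so $|Du|(M)\le h(M)=h(M)\|u\|_1$. Conversely, the coarea formula $|Du|(M)=\int_0^\infty \ell_{d-1}(\partial A_t)\,dt$, the layer-cake identity $\|u\|_1=\int_0^\infty\ell_d(A_t)\,dt$, and the pointwise bound $\ell_{d-1}(\partial A_t)\ge h(M)\,\ell_d(A_t)$ give the reverse inequality. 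Equality therefore forces $\ell_{d-1}(\partial A_t)=h(M)\,\ell_d(A_t)$ for almost every $t$, so each $A_t$ is either a null set or a Cheeger set, proving (b). For part (c), uniqueness of the Cheeger set $\subsetname$ forces $A_t=\subsetname$ up to null sets for almost every $t$ with $\ell_d(A_t)>0$; since the superlevel sets of $u$ are nested, $u$ must be constant on $\subsetname$ and vanish elsewhere, i.e.\ $u=c\,\chi_\subsetname$, with $\|u\|_1=1$ fixing $c=1/\ell_d(\subsetname)$. As this limit is independent of the chosen subsequence, the whole family converges, yielding $u_p\to\chi_\subsetname/\ell_d(\subsetname)$ in $L^1$.

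I expect the \emph{main obstacle} to be the nontriviality of the limit in part (b): ensuring that no mass escapes or concentrates as $p\to1$, so that $\|u\|_1=1$ rather than $0$, is precisely what makes the uniform integrability and Vitali step essential. It is the one place where the gradient bound must be used quantitatively, rather than merely to extract a convergent subsequence, and it is what ties the convergence of the Rayleigh quotient to the geometric conclusion about the level sets.
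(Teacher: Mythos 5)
Your proposal is correct and follows essentially the same route as the paper, whose own proof is a sketch deferring to Kawohl--Fridman and Parini: the sandwich via the $p$-Cheeger inequality $\lambda_p\ge\left(h(M)/p\right)^p$ (proved by the same coarea--H\"older argument with $w=u_p^p$ that the paper uses in its dynamic Theorem~\ref{thm:cheegerdynamic}) together with smooth test functions and Federer--Fleming for (a), compactness plus equality in the Rayleigh quotient forcing \eqref{eq:cheegerintegral} for (b), and uniqueness of the Cheeger set plus nestedness of superlevel sets for (c). The details you supply beyond the paper's sketch -- the Vitali/uniform-integrability step guaranteeing $\|u\|_1=1$, and the subsequence-uniqueness upgrade to convergence of the full family in (c) -- are exactly the points the cited references handle, so no genuine gap remains (modulo the routine identification of $\ell_{d-1}(\partial A_t)$ with the perimeter of the finite-perimeter level sets, which is also implicit in the paper).
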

\begin{proof}[Proof sketch.]
    For (\ref{thm:plaptoone_a}) see e.g. \cite[Corollary 6]{kawohl_isoperimetric_2003}.
    For (\ref{thm:plaptoone_b})
    one can follow 
    \cite[Theorem 8, Remark 10]{kawohl_isoperimetric_2003} to 
    show that by compactness of the embedding
    $BV(M)\rightarrow L^1(M)$ there must be a 
    subsequence converging to some $u$ in $L^1$, for which 
    one can deduce from (\ref{thm:plaptoone_a}) that it must fulfil
    \begin{equation}
    \label{eq:cheegerintegral}
        \int_{0}^\infty \ell_{d-1} (\partial A_t) \; dt = \int_0^\infty h(M)\ell_d(A_t) \; dt.
    \end{equation}
    As $\ell_{d-1}(\partial A_t)\leq h(M) \ell_d(A_t)$, 
    this implies that 
    $\ell_{d-1}(\partial A_t)= h(M) \ell_d(A_t)$ for almost all $t$, 
    and hence $A_t$ is either a null set or a Cheeger set. 
    For (\ref{thm:plaptoone_c}) we refer to \cite[Remark 3.2.]{parini_introduction_2011},
    where it is noted that one can show $\ell_{d-1}(\partial A_t)= h(M) \ell_d(A)$ for \emph{all} $t$. By uniqueness of the Cheeger set, this means that $A_t=\subsetname$ for all $t>0$ where $A_t$ is not a null set 
    and that this implies that $u$ is a suitably scaled characteristic function on a 
    Cheeger set of $M$. 
\end{proof}
In \cref{ssec:dynamiclimitpto1} we will show
an analogous statement to \Cref{thm:plaptoone}(\ref{thm:plaptoone_a}) for the dynamic $p$-Laplacian introduced in \cref{sec:dynamicplaplacian}. 
\Cref{thm:plaptoone}(\ref{thm:plaptoone_b}) and (\ref{thm:plaptoone_c}) 
are investigated numerically in \cref{sec:experiments}, in particular we study the improvement of the Cheeger ratios of the level 
sets as~$p\to 1$.

\section{A dynamic \torpdf{$p$}{p}-Laplacian}
\label{sec:dynamicplaplacian}
\subsection{Motivation}
We now aim to generalise the $p$-Laplacian to a
dynamic $p$-Laplacian which shares the same
connections to the dynamic Cheeger problem as
the classic $p$-Laplacian does to the classic Cheeger problem.
Recall that we consider a volume-preserving diffeomorphism $T:M\rightarrow M$,  use the pushforward operator $T_*:L^p(M)\rightarrow L^p(M)$, $T_*u := u\circ T^{-1}$, 
and its adjoint $T^*:L^q(M)\rightarrow L^q(M)$, $T^*u := u\circ T$, the Koopman operator.  
Note that for $u\in W^{1,p}_0(M)$ we have $T_*u\in W^{1,p}_0(M)$
by compactness of $M$ so we can use $T_*$ as an 
operator from $W^{1,p}_0(M)$ to~$W^{1,p}_0(M)$. 

Equipped with this we define a dynamic version $\dynamic{F}:W^{1,p}_0(M)\rightarrow \R$  of $F$ as 
\begin{align}
    \dynamic{F}(u) &:= \frac12 \left( \|\nabla u\|_p^p +\|\nabla T_* u\|_p^{p} \right)
\end{align}
or equivalently
\[
    \dynamic{F} = \frac12(F + F\circ T_*).
\]
It now follows directly from the differentiability of $F$ and boundedness of $T_*$ that $\dynamic{F}$ is differentiable with 
\begin{align}
    \label{eq:derivativeFbar}
     (\dynamic{F})'(u)v &= \frac12 \bigl( F'(u)v + F'(T_*u)T_*v \bigr)\\
    &= \frac12p\bigl((\Delta_pu)v+ (T^*\Delta_pT_*u)v\bigr).
    \label{eq:derivativeFbar2}
\end{align}
To get from \eqref{eq:derivativeFbar} to \eqref{eq:derivativeFbar2} we see the Koopman operator $T^*$ as the purely linear
algebraic adjoint of $T_*$, meaning that $T^*$ maps some $f\in (W^{1,p}_0(M))^*$ to $f\circ T_*$. This directly implies 
$F'(T_*u)T_*v = (F'(T_*u)\circ T_*)v = (T^*F'(T_*u))v$. While this differs from the 
common definition of $T^*$ as a map on $L^q(M)$ defined by $f\mapsto f\circ T$, the two definitions coincide on $L^q$ 
under the usual identification of some $f\in L^q$ with 
the functional  $(u\mapsto \int_M uf)\in (W^{1,p}_0(M))^*$, as by 
volume preservation of $T$:
\[
    \left(u\mapsto \int_M u\cdot f\right)\circ T_* =
    \left(u\mapsto \int_M (u\circ T^{-1}) \cdot f\right) = 
    \left(u\mapsto \int_M u\cdot(f\circ T)\right).
\]
This motivates the following definition for a dynamic $p$-Laplacian:
\begin{align}
    \dynamic{\Delta}_pu &:= -\frac1p(\dynamic{F})'(u) = \frac12 \left(\Delta_p u + T^{*}\Delta_pT_*u\right). 
\end{align}
Plugging in $F'(u)v = p\int_M |\nabla u|^{p-2}\nabla u\nabla v$
into \eqref{eq:derivativeFbar} we get the equivalent definition 
\begin{equation}
\label{eq:dyn_p_Laplace}
(\dynamic{\Delta}_p u)v:=\frac12\left (\int_M \nabla u |\nabla u|^{p-2}\nabla v + \int_M\nabla T_*u|\nabla T_*u|^{p-2} \nabla T_*v\right).
\end{equation}
Analogously to \Cref{thm:variationalplap}, we have:
\begin{thm}
    \label{thm:dyn_eigenvalue}
    Let $d\geq2$, $1<p<2$ and $M \subseteq \mathbb{R}^d$
    a compact $d$-dimensional manifold with 
    nonempty Lipschitz boundary. 
    Define $\dynamic{J}:W_0^{1,p}(M)\backslash \{0\}\rightarrow
    \mathbb{R}$ by
    \[
        \dynamic{J}(u):= \frac{\dynamic{F}(u)}
        {G(u)}.
    \]
    The infimum
    \begin{equation}
    \label{eq:dyn_p_Sobolev_constant}
    \dynamic{\lambda}_p (M,T):= \inf_{u\in W_0^{1,p}(M)\backslash\{0\}} \dynamic{J}(u)
    \end{equation}
    is positive and achieved by a function $\dynamic{u}_p\in W_0^{1,p}(M)$ satisfying
    \begin{equation}
    \label{eq:dynamic_eigenproblem}
        -\dynamic{\Delta}_p \dynamic{u}_p = \dynamic{\lambda}_p \dynamic{u}_p|\dynamic{u}_p|^{p-2}
    \end{equation}
    weakly.
    We call $\dynamic{\lambda}_p$ the \emph{first eigenvalue} of
    $-\dynamic{\Delta}_p$ and
    $\dynamic{u}_p$ a \emph{first eigenfunction} of $-\dynamic{\Delta}_p$.
\end{thm}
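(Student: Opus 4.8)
The plan is to run the direct method of the calculus of variations, mirroring the proof of \Cref{thm:variationalplap}, since $\dynamic{J}$ differs from the static quotient $J$ only through the additional term $\tfrac12\|\nabla T_* u\|_p^p$ in the numerator. Both $\dynamic{F}$ and $G$ are positively $p$-homogeneous, so $\dynamic{J}$ is invariant under scaling and I may restrict to the constraint set $\{\|u\|_p^p = 1\}$. Positivity of $\dynamic{\lambda}_p$ is then immediate: from $\dynamic{F}(u)\geq\tfrac12\|\nabla u\|_p^p$ one gets $\dynamic{J}(u)\geq\tfrac12\,\|\nabla u\|_p^p/\|u\|_p^p\geq\tfrac12\lambda_p$ for every nonzero $u$, where $\lambda_p>0$ is the static first eigenvalue of \Cref{thm:variationalplap} (equivalently, this is the Poincaré inequality on $W_0^{1,p}(M)$); hence $\dynamic{\lambda}_p\geq\tfrac12\lambda_p>0$.

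Next I would take a minimising sequence $(u_n)$ normalised by $\|u_n\|_p=1$ with $\dynamic{F}(u_n)\to\dynamic{\lambda}_p$. Boundedness of $\dynamic{F}(u_n)$ together with $\dynamic{F}(u_n)\geq\tfrac12\|\nabla u_n\|_p^p$ controls the gradients, so $(u_n)$ is bounded in $W_0^{1,p}(M)$. As $1<p<2$ this space is reflexive, so after passing to a subsequence $u_n\rightharpoonup u$ weakly in $W_0^{1,p}(M)$; the compact Rellich--Kondrachov embedding $W_0^{1,p}(M)\hookrightarrow L^p(M)$ then gives $u_n\to u$ strongly in $L^p$, whence $\|u\|_p=1$ and in particular $u\neq 0$.

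The step I expect to be the main obstacle is the weak lower semicontinuity of $\dynamic{F}$, and specifically of its dynamic term. The static part $u\mapsto\|\nabla u\|_p^p$ is convex and strongly continuous, hence weakly lower semicontinuous. For the dynamic part the crucial point is that $T_*$ is a \emph{bounded} linear operator on $W_0^{1,p}(M)$: on the compact manifold $M$ the chain rule expresses $\nabla(u\circ T^{-1})$ through $(\nabla u)\circ T^{-1}$ and $DT^{-1}$, and the uniform bounds on $DT^{-1}$ together with volume preservation of $T$ bound $\|\nabla T_* u\|_p$ by a constant multiple of $\|\nabla u\|_p$. A bounded linear operator is weak--weak continuous, so $u_n\rightharpoonup u$ forces $T_* u_n\rightharpoonup T_* u$, and applying weak lower semicontinuity of $\|\nabla\cdot\|_p^p$ to the sequence $(T_* u_n)$ yields $\liminf_n\|\nabla T_* u_n\|_p^p\geq\|\nabla T_* u\|_p^p$. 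Adding the two estimates gives $\dynamic{F}(u)\leq\liminf_n\dynamic{F}(u_n)=\dynamic{\lambda}_p$, and since $G(u)=1$ this forces $\dynamic{J}(u)=\dynamic{\lambda}_p$. Thus $\dynamic{u}_p:=u$ is a minimiser.

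Finally, to obtain the weak eigenvalue equation I would compute the first variation of $\dynamic{J}$ at the minimiser: for each $v\in W_0^{1,p}(M)$ the scalar map $\varepsilon\mapsto\dynamic{J}(\dynamic{u}_p+\varepsilon v)$ is differentiable (by differentiability of $\dynamic{F}$ and $G$) and attains a minimum at $\varepsilon=0$, so its derivative there vanishes. Using the Fréchet derivative \eqref{eq:derivativeFbar2} and $G'(u)v=p\int_M u|u|^{p-2}v$, this reads $(\dynamic{F})'(\dynamic{u}_p)v=\dynamic{\lambda}_p\,G'(\dynamic{u}_p)v$ for every $v$, which is precisely the weak form \eqref{eq:dynamic_eigenproblem} once one recalls $\dynamic{\Delta}_p u=-\tfrac1p(\dynamic{F})'(u)$. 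That the multiplier equals $\dynamic{\lambda}_p$ follows by testing with $v=\dynamic{u}_p$: $p$-homogeneity gives $(\dynamic{F})'(\dynamic{u}_p)\dynamic{u}_p=p\,\dynamic{F}(\dynamic{u}_p)$ and $G'(\dynamic{u}_p)\dynamic{u}_p=p\,G(\dynamic{u}_p)$, so the multiplier equals $\dynamic{F}(\dynamic{u}_p)/G(\dynamic{u}_p)=\dynamic{\lambda}_p$.
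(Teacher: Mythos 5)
Your proposal is correct and follows essentially the same route as the paper: the direct method with positivity via the Poincar\'e inequality (your bound $\dynamic{J}(u)\geq\tfrac12\lambda_p$ is the same estimate), weak compactness of a normalised minimising sequence plus Rellich--Kondrachov, weak lower semicontinuity of $\dynamic{F}$ obtained exactly as in the paper by noting that the bounded linear operator $T_*$ is weak--weak continuous on $W_0^{1,p}(M)$, and the Euler--Lagrange equation at the minimiser. The only cosmetic difference is at the end: the paper differentiates the quotient $\dynamic{J}=\dynamic{F}/G$ directly, so the multiplier $\dynamic{\lambda}_p$ appears automatically, whereas you identify it afterwards by testing with $\dynamic{u}_p$ and using $p$-homogeneity --- both steps are equivalent and correct.
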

\begin{rem} \label{rem:uniq}
We do not prove a uniqueness result as in \Cref{thm:plapuniqueness} and thus do not talk about \emph{the} first eigenfunction of $-\dynamic{\Delta}_p$ in \Cref{thm:dyn_eigenvalue}. The eigenfunction $u^D_p$ is also not proven to be positive, in analogy to the positivity
result  in \Cref{thm:positivity}.
\end{rem}
\begin{proof}[Proof of \Cref{thm:dyn_eigenvalue}]
    We use the standard direct method that is
    also used for the (static) $p$-Laplacian. The
    crucial part is weak lower semicontinuity of the
    numerator of $\dynamic{J}$.
    It is well known that $u\mapsto\|\nabla u\|_p^p$
    is weakly lower semicontinuous, as it is continuous
    and convex (see e.g.\ \cite[Theorem 1.5.3]{badiale_semilinear_2011}).
    To establish weak lower semicontinuity of
    $u\mapsto \|\nabla T_* u\|_p^p$, we first show
    that for every weakly convergent series
    $u_k\rightharpoonup u$ we also have~$T_*u_k\rightharpoonup T_*u$. This is a simple
    consequence of linearity and boundedness of $T_*$:
    For any $f\in \sobolevdual$ we can define
    $\tilde f:= f\circ T_* \in \sobolevdual$,
    and hence $u_k\rightharpoonup u$ implies
    \[
        f(T_*u_k)=\tilde f(u_k) \rightarrow \tilde f(u)
                 = f(T_*u).
    \]

    This proves $T_*u_k \rightharpoonup T_*u$ and implies that
    for a weakly lower semicontinuous map $H$, the map
    $H\circ T_*$ is also weakly lower semicontinuous.
    In particular the functional $u\mapsto\|\nabla T_*u\|_p^p$
    and hence also the functional
    \[
        \dynamic{F}(u) := \frac12 \left(\|\nabla u\|_p^p +
                             \|\nabla T_*u\|_p^p\right)
    \]
    are weakly lower semicontinuous.
    Now we can proceed with a standard argument,
    as seen e.g.\ in
    \cite[Theorem 2.6.11]{badiale_semilinear_2011}:
    Clearly, $\dynamic{J}$ is bounded from below by $0$ and
    thus has a non-negative finite infimum~$\dynamic{\lambda}_p$.
    The Poincar\'e inequality establishes that
    for some positive $C$,
    \[
        G(u) = \|u\|_p^p \leq C\|\nabla u\|_p^p \leq 2C\dynamic{F}(u)
    \]
    and thus $\dynamic{\lambda}_p\geq\frac1{2C}$ is positive.
    Let $(u_k)_k$ be a minimising sequence for $\dynamic{J}$.
    We can assume without loss that $\|u_k\|_p = 1$ for all $k$, as $\dynamic{J}$ is homogeneous. The sequence is
    therefore bounded in $W_0^{1,p}(M)$ and we
    can pass to a weakly convergent
    subsequence and assume $u_k\rightharpoonup u$ for
    some~$u\in W_0^{1,p}(M)$.
    Now by the Rellich--Kondrachov theorem (see e.g. \cite[Theorem 6.3]{adams2003sobolev}) the inclusion mapping
    $W_0^{1,p}(M)\xhookrightarrow{} L^p(M)$ is compact,
    and as compact operators map weakly convergent
    sequences to strongly convergent ones, we have
    $u_k \rightarrow u$ in $L^p$ and hence~$G(u) :=\|u\|_p^p = 1$.
    Finally, by weak lower semicontinuity of $\dynamic{F}$ we have
    \[
        \dynamic{\lambda}_p \leq \dynamic{J}(u) =
        \dynamic{F}(u) \leq \liminf_k \dynamic{F}(u_k) =
        \liminf_k \dynamic{J}(u_k) = \dynamic{\lambda}_p
    \]
    and hence $\dynamic{\lambda}_p$ is achieved by some $\dynamic{u}_p:=u$.
    To show that $\dynamic{u}_p$ satisfies (\ref{eq:dynamic_eigenproblem}), note that since $\dynamic{u}_p$ is a global minimum of $\dynamic{J} = \dynamic{F}/G$ we have $(\dynamic{J})'(\dynamic{u}_p)= 0$, so for all $v\in W_0^{1,p}(M)$:
    \begin{align*}
        0 &= (\dynamic{J})'(\dynamic{u}_p)(v)  \\
          &= \frac{1}{G(\dynamic{u}_p)^2}(G(\dynamic{u}_p)(\dynamic{F})'(\dynamic{u}_p)(v) -
                           \dynamic{F}(\dynamic{u}_p)G'(\dynamic{u}_p)(v)) \\
          &= (\dynamic{F})'(\dynamic{u}_p)(v)- \dynamic{\lambda}_pG'(\dynamic{u}_p)(v).
    \end{align*}
    Now the last expression is the weak form of
    $-p\dynamic{\Delta}_p \dynamic{u}_p = p\dynamic{\lambda}_p \dynamic{u}_p|\dynamic{u}_p|^{p-2}$,
    which shows the claim.
\end{proof}

\subsection{Behaviour as \torpdf{$p\rightarrow 1$}{p goes to ll1}}
\label{ssec:dynamiclimitpto1}
\Cref{thm:plaptoone}(\ref{thm:plaptoone_a}) states that  the first eigenvalue $\lambda_p$ of $-\Delta_p$ with Dirichlet boundary conditions converges to $h(M)$ as $p\rightarrow 1$ in the static situation.
We prove the analogous statement for the \emph{dynamic} $p$-Laplacian with Dirichlet boundary conditions. 
In the proof, an inequality that plays a crucial role is
\[
    \lambda_p \geq \left(\frac{h(M)}{p}\right)^p,
\]
which  is called the \emph{Cheeger} inequality in the case $p=2$ (see \cite{gunning_lower_1970} for
$p=2$ and \cite[Appendix]{lefton_numerical_1997} or 
\cite[Theorem~3]{kawohl_isoperimetric_2003} for general $p$).
We prove this inequality for the \emph{dynamic} $p$-Laplacian with Dirichlet boundary conditions.
For the readers' convenience, we state two classical results that we use in the proof.
\begin{thm}[Co-area formula] Let $M\subseteq{\mathbb{R^d}}$ be a compact $d$-dimensional submanifold with Lipschitz boundary and~$u\in C_0^{\infty}(M)$. Then
       \[ \int_M |\nabla u| = \int_{-\infty}^{\infty}
            \ell_{d-1}(\partial A(t))\ dt,
        \]
        where $A(t)=\{x\in M\mid u(x) > t\}$ and the boundary is relative to~$M$.
\end{thm}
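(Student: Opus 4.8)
The plan is to reduce the statement to the general co-area formula for Lipschitz maps (Federer), specialised to the smooth, compactly supported function \(u\) and the constant weight \(g\equiv 1\) on \(M\). Federer's formula asserts that for Lipschitz \(u\) and integrable \(g\),
\[
\int_M g(x)\,|\nabla u(x)|\,dx = \int_{-\infty}^{\infty}\Bigl(\int_{u^{-1}(t)} g\,d\mathcal{H}^{d-1}\Bigr)\,dt,
\]
where \(\mathcal{H}^{d-1}\) is \((d-1)\)-dimensional Hausdorff measure. Taking \(g\equiv 1\) collapses the inner integral to \(\mathcal{H}^{d-1}(u^{-1}(t)\cap M)=\ell_{d-1}(u^{-1}(t))\), and the only remaining task is to identify the level set \(u^{-1}(t)\) with the relative boundary \(\partial A(t)\) of the superlevel set for almost every \(t\). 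Since the paper states this result "for the readers' convenience" as a known tool, in practice I would simply cite Federer here; but if a self-contained argument were wanted, I would proceed as follows.

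First I would invoke Sard's theorem: as \(u\) is smooth, its set of critical values has Lebesgue measure zero, so for almost every \(t\) the level set \(\Sigma_t:=u^{-1}(t)\) is a smooth embedded compact \((d-1)\)-dimensional submanifold on which \(|\nabla u|>0\). Because \(u\) has compact support in the interior of \(M\), for every \(t\neq 0\) the set \(\Sigma_t\) is compactly contained in the interior, where it coincides exactly with \(\partial A(t)\) taken relative to \(M\); the single value \(t=0\) contributes nothing to the \(t\)-integral. Next, on the open set \(U:=\{|\nabla u|>0\}\) I would establish the local factorisation of the volume form, namely \(|\nabla u|\,dx = d\mathcal{H}^{d-1}\!\restriction_{\Sigma_t}\,dt\), the precise statement that \(1/|\nabla u|\) measures the speed at which the level sets sweep out volume. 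Integrating this identity over \(U\) and applying Fubini yields
\[
\int_U |\nabla u|\,dx = \int_{-\infty}^{\infty} \mathcal{H}^{d-1}(\Sigma_t \cap U)\,dt.
\]

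Finally I would dispose of the critical set \(M\setminus U=\{|\nabla u|=0\}\): it contributes nothing to the left-hand side because the integrand vanishes there, and by Sard it contributes nothing to the right-hand side because for almost every \(t\) one has \(\Sigma_t\subset U\), whence \(\mathcal{H}^{d-1}(\Sigma_t\cap U)=\mathcal{H}^{d-1}(\Sigma_t)=\ell_{d-1}(\partial A(t))\). Combining the three steps gives the claimed equality.

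The main obstacle is the local factorisation step: making rigorous the disintegration of \(|\nabla u|\,dx\) as \(d\mathcal{H}^{d-1}\,dt\) along the level sets. On the regular set this follows from the implicit function theorem together with the change-of-variables formula, choosing local coordinates in which one coordinate is \(u\) itself; the delicate points are patching the local charts into a global statement and controlling the behaviour as \(t\) approaches a critical value. This analytic core is exactly what the general co-area formula packages, which is why the most economical route is to quote Federer's theorem rather than reprove it.
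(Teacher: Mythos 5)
Your proposal is correct and follows essentially the same route as the paper: cite the standard co-area formula in its usual form with $u^{-1}(t)$ (the paper uses Chavel, you use Federer --- both are the same classical result), and then identify $u^{-1}(t)$ with $\partial A(t)$ for almost every $t$ via Sard's theorem. That identification step is exactly what the paper delegates to \Cref{lem:introducingabs}(a), whose proof uses the same Sard-plus-regular-point argument you sketch.
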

\begin{proof} See e.g.\ \cite[Theorem I.3.4 and Corollary~I.3.1]{chavel_isoperimetric_2001}. The formula is commonly stated with $u^{-1}(t)$ instead of $\partial A(t)$. For a proof that $u^{-1}(t) = \partial A(t)$ for almost all $t$ see \Cref{lem:introducingabs}.
\end{proof}

The second theorem is a special case of Cavalieri's Principle:
\begin{thm}[Cavalieri's Principle]
 Let $M\subseteq{\mathbb{R}^d}$ be a compact $d$-dimensional manifold with Lipschitz boundary and $f:M\rightarrow \mathbb{R}$ be a non-negative measurable function. Then
\[
    \int_M f = \int_0^{\infty} \ell_d(\{f > t\})\ dt.
\]
\end{thm}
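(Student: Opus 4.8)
The plan is to establish the identity by the standard \emph{layer-cake} (distribution function) argument, reducing it to an application of Tonelli's theorem for non-negative integrands. The starting observation is that for each fixed $x\in M$ the non-negativity of $f$ lets us write the value $f(x)$ as an integral of an indicator over the vertical fibre, namely
\[
    f(x) = \int_0^{f(x)} 1\; dt = \int_0^{\infty} \mathbf 1_{\{t < f(x)\}}\; dt .
\]
Substituting this representation into $\int_M f$ turns a single integral over $M$ into an iterated integral over the product $M\times[0,\infty)$, and the whole result follows from interchanging the order of integration.

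Concretely, I would proceed in three steps. First, record the pointwise identity above and substitute to get $\int_M f\,d\ell_d = \int_M \int_0^\infty \mathbf 1_{\{t < f(x)\}}\, dt\, d\ell_d(x)$. Second, apply Tonelli's theorem: since the integrand $(x,t)\mapsto \mathbf 1_{\{t < f(x)\}}$ is non-negative and (jointly) measurable, the iterated integrals agree with the product integral in either order, so I may swap to obtain $\int_0^\infty \int_M \mathbf 1_{\{t < f(x)\}}\, d\ell_d(x)\, dt$. Third, evaluate the inner integral: for fixed $t$ one has $\int_M \mathbf 1_{\{t < f(x)\}}\, d\ell_d(x) = \ell_d(\{x\in M \mid f(x) > t\}) = \ell_d(\{f>t\})$, which is exactly the distribution function appearing on the right-hand side. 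Assembling these yields $\int_M f = \int_0^\infty \ell_d(\{f>t\})\, dt$, as claimed.

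The only genuine point requiring care, and hence the main obstacle, is the hypothesis needed to invoke Tonelli, namely the joint measurability of the set $E:=\{(x,t)\in M\times[0,\infty) \mid t < f(x)\}$ with respect to the product $\sigma$-algebra. This is where the measurability of $f$ is used: $E$ is the preimage of the open half-space $\{(s,t)\mid t<s\}$ under the measurable map $(x,t)\mapsto (f(x),t)$, so $E$ is a measurable subset of the product space and its indicator is a legitimate non-negative measurable integrand. Once joint measurability is in hand, Tonelli applies verbatim, and no integrability assumption on $f$ is required, since both sides are permitted to equal $+\infty$ simultaneously. Every remaining step is a routine application of the definitions of the Lebesgue integral and of $\ell_d$.
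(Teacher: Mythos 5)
Your proof is correct and complete. Note that the paper itself does not prove this statement --- it simply cites \cite[Proposition I.3.3]{chavel_isoperimetric_2001} --- so your self-contained layer-cake argument via Tonelli's theorem is exactly the standard proof that such a citation points to, and there is nothing to fault in it: the pointwise identity $f(x)=\int_0^\infty \mathbf{1}_{\{t<f(x)\}}\,dt$, the interchange of integrals for a non-negative jointly measurable integrand, and the evaluation of the inner integral as $\ell_d(\{f>t\})$ are all carried out properly. You also correctly isolate the one genuinely delicate point, joint measurability of $E=\{(x,t)\mid t<f(x)\}$; your preimage argument works (alternatively one can write $E=\bigcup_{q\in\mathbb{Q}\cap[0,\infty)}\{f>q\}\times[0,q]$ to see product measurability directly). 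A pleasant by-product of your argument, worth noticing, is that none of the geometric hypotheses in the statement --- compactness of $M$, the manifold structure, the Lipschitz boundary --- are actually used: the identity holds for any non-negative measurable function on any $\sigma$-finite measure space, which is presumably why the paper relegates it to a reference rather than proving it.
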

\begin{proof}
    See \cite[Proposition I.3.3]{chavel_isoperimetric_2001}
\end{proof}
Equipped with these results, we now proceed to the
proof:
\begin{thm}
    \label{thm:dynplaptoone1}
    Let $M\subset\mathbb{R}^d$ be a compact $d$-dimensional 
    submanifold with Lipschitz boundary, $T: M\rightarrow M$ a smooth volume-preserving diffeomorphism and $p\geq 1$.
    Then
    \begin{equation}
        \dynamic{\lambda}_p(M, T) \geq
        \left(\frac{\dynamic{h}(M, T) }{p}\right)^p.
    \end{equation}
    \label{thm:cheegerdynamic}
\end{thm}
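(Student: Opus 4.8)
The plan is to prove the pointwise bound $\dynamic{J}(u)\ge\bigl(\dynamic{h}(M,T)/p\bigr)^p$ for every admissible $u$ and then take the infimum. Since $\dynamic{J}=\dynamic{F}/G$ is continuous on $W^{1,p}_0(M)\setminus\{0\}$ (boundedness of $T_*$ makes both $\dynamic{F}$ and $G$ continuous) and $C^\infty_c(M)$ is dense in $W^{1,p}_0(M)$, it suffices to establish the bound for $u\in C^\infty_c(M)$: a lower bound holding on a dense set passes to the whole space by continuity. The engine is the same substitution used in the static case (\cite[Theorem~3]{kawohl_isoperimetric_2003}): introduce the nonnegative test function $w:=|u|^p$ and play the geometric (co-area) side against the analytic ($p$-norm) side.

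First I would establish the integrated dynamic Sobolev-type estimate
\[
    \|\nabla w\|_1 + \|\nabla T_* w\|_1 \;\ge\; 2\,\dynamic{h}(M,T)\,\|w\|_1 .
\]
Applying the co-area formula to $w$ and, separately, to $T_* w=|T_*u|^p$, I would use the key identity that the superlevel set $\{T_*w>t\}$ equals $T(A(t))$, where $A(t):=\{w>t\}$, so that (as $T$ is a diffeomorphism) its relative boundary is $T(\partial A(t))$. Adding the two co-area identities turns the left-hand side into $\int_0^\infty\bigl[\ell_{d-1}(\partial A(t))+\ell_{d-1}(T(\partial A(t)))\bigr]\,dt$. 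For almost every $t$ the set $A(t)$ has smooth boundary (Sard's theorem) and is admissible in the infimum defining $\dynamic{h}(M,T)$, so the integrand is bounded below by $2\,\dynamic{h}(M,T)\,\ell_d(A(t))$; integrating and invoking Cavalieri's principle, $\int_0^\infty\ell_d(A(t))\,dt=\|w\|_1$, yields the claimed estimate. Equivalently, one could cite the already-recalled dynamic Federer--Fleming equality $\dynamic{h}(M,T)=\dynamic{s}(M,T)$ applied to $w$, after extending it from $C^\infty_c(M)$ to $w\in W^{1,1}_0(M)$ by density.

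Next I would convert this back to $p$-norms of $u$. Since $|\nabla w|=p\,|u|^{p-1}|\nabla u|$, Hölder's inequality with exponents $p/(p-1)$ and $p$ gives $\|\nabla w\|_1\le p\,\|u\|_p^{p-1}\|\nabla u\|_p$, and the identical computation for $T_*w$ together with volume preservation $\|T_*u\|_p=\|u\|_p$ gives $\|\nabla T_*w\|_1\le p\,\|u\|_p^{p-1}\|\nabla T_*u\|_p$; moreover $\|w\|_1=\|u\|_p^p$. Substituting into the estimate above and cancelling $\|u\|_p^{p-1}$ yields
\[
    \frac{2\,\dynamic{h}(M,T)}{p}\,\|u\|_p \;\le\; \|\nabla u\|_p + \|\nabla T_*u\|_p .
\]
Finally, convexity of $t\mapsto t^p$ for $p\ge 1$ gives $\bigl((a+b)/2\bigr)^p\le (a^p+b^p)/2$, so raising the previous line to the $p$-th power produces $\bigl(\dynamic{h}(M,T)/p\bigr)^p\|u\|_p^p\le \tfrac12\bigl(\|\nabla u\|_p^p+\|\nabla T_*u\|_p^p\bigr)=\dynamic{F}(u)$, i.e.\ $\dynamic{J}(u)\ge\bigl(\dynamic{h}(M,T)/p\bigr)^p$; taking the infimum over $u$ finishes the proof.

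I expect the main obstacle to be the level-set step: carefully justifying that $w=|u|^p$ (which is only $C^1$, not $C^\infty$) may be fed into the co-area formula and the geometric definition of $\dynamic{h}$, that $A(t)$ is an admissible competitor for almost every $t$, and---this is the genuinely dynamic part---that the superlevel sets and boundaries of $T_*w$ are exactly the $T$-images of those of $w$, which is where volume preservation and the diffeomorphism property of $T$ enter. The Hölder and convexity manipulations are routine once this estimate is in place.
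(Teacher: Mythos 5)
Your proposal is correct and runs on the same engine as the paper's proof: a 1-norm estimate against $\dynamic{h}(M,T)$ obtained from the co-area formula plus Cavalieri, using that pushforward commutes with taking superlevel sets ($\{T_*w>t\}=T(\{w>t\})$ and $\partial(TA)=T(\partial A)$ since $T$ is a homeomorphism), followed by the H\"older bound $\|\nabla w\|_1\le p\|u\|_p^{p-1}\|\nabla u\|_p$, volume preservation $\|T_*u\|_p=\|u\|_p$, and the convexity step $\bigl(\tfrac{a+b}{2}\bigr)^p\le\tfrac{a^p+b^p}{2}$. The differences are organisational rather than substantive. First, you substitute $w=|u|^p$ where the paper uses the \emph{signed} power $\Phi(u)=|u|^{p-1}u$; your nonnegative choice spares you the paper's \Cref{lem:introducingabs} (the a.e.\ decomposition $\partial\{|w|>t\}=\partial A(t)\,\dot\cup\,\partial A(-t)$ needed to handle sign-changing $w$), but, as you correctly flag, $w$ is then only $C^1$, so the Sard/implicit-function admissibility argument must be run on $w$ restricted to the open set $\{u\neq 0\}$ --- harmless, since for $t>0$ one has $\partial\{w>t\}\subset\{|u|=t^{1/p}\}$, where $u\neq 0$ and $w$ is locally smooth, and $\{w>t\}$ is compactly contained in the interior of $M$. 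Second, the paper sidesteps this regularity issue entirely by a different ordering: it proves the $p=1$ inequality \eqref{coareacavalieri} for smooth (signed) $w$, extends it to all of $W^{1,1}_0(M)$ by density, and only then substitutes $w=\Phi(v)$ for arbitrary $v\in W^{1,p}_0(M)$, checking $\Phi(v),\,T_*\Phi(v)\in W^{1,1}_0(M)$ via the same H\"older computation --- which is precisely the fallback you yourself mention (extend the 1-norm estimate to $W^{1,1}_0$ by density, then substitute); with that ordering your flagged obstacle dissolves, and your final-stage continuity argument for $\dynamic{J}$ on $W^{1,p}_0(M)\setminus\{0\}$ becomes unnecessary, since the bound is then proved for every Sobolev function directly. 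Either placement of the density step is valid: the paper's buys uniform hypotheses (co-area is only ever applied to $C^\infty_0$ functions, at the cost of the sign lemma), while yours buys sign-free level sets at the price of a merely $C^1$ substitute.
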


\begin{proof}
We modify the proof that is given in
    the appendix of \cite{lefton_numerical_1997} and
    \cite[Theorem 3]{kawohl_isoperimetric_2003}.
    
\textbf{Step 1.} First we prove the inequality for~$p=1$.
For this, assume $w\in C^{\infty}_0(M)$
    and define
    $A(t):= \{ x\in M \mid w(x) > t\}$.
    By the coarea formula
    \begin{equation}
        \int_M |\nabla w| =
        \int_{-\infty}^\infty \ell_{d-1}(\partial A(t))\ dt,
        \label{eq:coarea1}
    \end{equation}
    where the boundary is relative to $M$.
    If we apply the coarea formula to $T_*w$ we get
    \begin{align}
        \int_M |\nabla T_*w| &=
        \int_{-\infty}^\infty \ell_{d-1}(\partial \{x\in M\mid T_*w(x) > t\})\ dt.\\
        &=\int_{-\infty}^\infty \ell_{d-1}(\partial \{x \in M\mid w(T^{-1}(x)) > t\})\ dt.\\
        &=\int_{-\infty}^\infty \ell_{d-1}(\partial \{Ty \mid w(y) > t\})\ dt.\\
        &=\int_{-\infty}^\infty \ell_{d-1}(\partial \{Ty \mid y\in A(t)\})\ dt.\\
        &=
        \int_{-\infty}^\infty \ell_{d-1}(\partial(TA(t)))\ dt.
       \label{eq:coarea2}
    \end{align}
    Now by \Cref{lem:introducingabs}(c)
    we can express the integrals on the right-hand side of
    \eqref{eq:coarea1} and \eqref{eq:coarea2} by using the
    sets $B(t) := \{ x\in M \mid |w(x)| > t \}$ to get 
   \begin{align} 
        \int_M|\nabla w|&=
        \int_0^\infty \ell_{d-1}(\partial B(t))\ dt.\label{eq:coarea3}\\
        \int_M|\nabla T_*w|&=
        \int_0^\infty \ell_{d-1}(\partial(TB(t)))\ dt.\label{eq:coarea4}
    \end{align}
    Combining \eqref{eq:coarea3} and \eqref{eq:coarea4} we can continue with
    \begin{align}
        \int_{M} \frac12(|\nabla w|+|\nabla T_*w|)
        &= \frac12\int_0^\infty \ell_{d-1}(\partial B(t)) +
                                  \ell_{d-1}(\partial TB(t))\ dt\\
        &= \frac12\int_0^{\max |w|}
                             \frac{\ell_{d-1}(\partial B(t)) +
                                   \ell_{d-1}(\partial TB(t))}
                                  {\ell_d(B(t))}\ell_d(B(t))dt
                                  \label{eq:ratiobeforecheeger}
                                  \\
        &\geq \dynamic{h}(M, T) \int_0^{\max |w|} \ell_d(B(t))\ dt
                                  \label{eq:ratioaftercheeger}
        \\
        &=\dynamic{h}(M, T)\int_M |w|,\label{coareacavalieri}
    \end{align}
    where in the last step we have applied Cavalieri's principle to~$|w|$.

    For the step \eqref{eq:ratiobeforecheeger} to \eqref{eq:ratioaftercheeger}
    we also need to justify that $B(t)$ is in the admissible class
    for the Cheeger problem for almost all $t$. First observe
    that, as $|w|$ is continuous, and vanishes on the
    boundary of $M$, $B(t)$ does not touch $\partial M$ if
    $t>0$.
    As for the regularity of $\partial B(t)$, note that
    by \Cref{lem:introducingabs}(b) we have $\partial B(t)$ is the disjoint union of $\partial
    A(t)$ and $\partial A(-t)$ for almost all $t$.
     Sard's
    theorem tells us that $w^{-1}(t)$ contains no
    critical points for almost all $t$. Hence, by the implicit
    function theorem, $A(t)$ has $C^\infty$ boundary for
    almost all $t$ and so does $B(t)$.
    After applying the same argument
    to $TB(t)$ we know that we may indeed bound the
    fraction in \eqref{eq:ratiobeforecheeger} by~$\dynamic{h}(M, T)$.

    The inequality (\ref{coareacavalieri}) also holds for
    $w\in W_0^{1,1}(M)$ as
    $C^{\infty}_0(M)$ is dense in $W^{1,1}_0(M)$. This shows the claim for~$p=1$.

\textbf{Step 2.} Next we generalise to~$p>1$, essentially by reducing the result to the case in Step~1.
    For $p>1$ and $v\in W^{1,p}(M)$ define
    $\Phi(v):= |v|^{p-1}v$. As $(|x|^{p-1}x)' = p|x|^{p-1}$, we 
    can use the chain rule to get that the 
    gradient of $\Phi(v)$ is 
    \[
        \nabla \Phi(v(x)) =
        p|v(x)|^{p-1}\nabla v(x). 
    \]
    By Hölder's inequality we can estimate
    \begin{align}
        \int_{M} |\nabla \Phi(v)| =
        p\int_M |v|^{p-1}|\nabla v| \leq
        p\left(\int_M
        |v|^{(p-1)\frac{p}{p-1}}\right)^{\frac{p-1}{p}}
        \|\nabla v\|_p = p\|v\|_p^{p-1}\|\nabla v\|_p
        \label{holder},
    \end{align}
    implying that $w:=\Phi(v)\in W^{1,1}_0(M)$
    for~$v\in W^{1,p}_0(M)$. 
  
    We repeat for $T_*v$. 
    First note that 
    \[
        \Phi(T_*v(x)) = \Phi(v(T^{-1}(x))) = |v(T^{-1}x)|^{p-1}v(T^{-1}x) = \left[T_*(|v|^{p-1}v)\right](x) =
        T_*(\Phi(v))(x)
    \]
    and hence~$\Phi(T_*v) = T_*(\Phi(v))$.
    Thus, as above: 
    \begin{equation}
        \int_{M} |\nabla T_*w|=
        \int_{M} |\nabla T_* \Phi(v)|=
        \int_{M} |\nabla \Phi(T_*(v))|
        \leq p \|T_*v\|^{p-1}_p\|\nabla T_*v\|_p
        =p \|v\|^{p-1}_p\|\nabla T_*v\|_p
        \label{holder2}
    \end{equation}
    using that $T$ is volume-preserving in the last step.
    Now on the right-hand side of (\ref{holder2}), both norms are bounded by
    compactness of $M$ and smoothness of $T$. Hence,
    also~$T_*w\in W^{1,1}_0(M)$.
    We can therefore apply inequality \eqref{coareacavalieri}
    to get
    \begin{align}
        \dynamic{h}(M, T)
        &\overset{\eqref{coareacavalieri}}{\leq}
        \frac{\int_{M} |\nabla w| + |\nabla T_*w| }{2\int_M|w|}\\
        &\overset{\eqref{holder}, \eqref{holder2}}{\leq}
        \frac{p\|v\|^{p-1}_p(\|\nabla v\|_p + \|\nabla T_*v\|_p)}{2\int_{M} |v|^p}\\
        &= p\frac{\|\nabla v\|_p + \|\nabla T_*v\|_p}{2\|v\|_p}
    \end{align}
    Raising this to the power of $p$ and using that convexity of $x^p$ implies
    $(\frac{a+b}{2})^p \leq \frac{a^p+b^p}{2}$ we arrive at
    \begin{align}
        \left(\frac{\dynamic{h}(M, T)}{p}\right)^p
        &\leq \left(\frac{\|\nabla v\|_p + \|\nabla T_*v\|}{2\|v\|_p}\right)^p\\
        &\leq \frac{\|\nabla v\|_p^p + \|\nabla T_*v\|_p^p}{2\|v\|^p_p}
    \end{align}
    which gives the desired inequality after passing to the infimum.
\end{proof}

Finally, we can turn our attention to the behaviour of $\dynamic{\lambda}_p$ as~\mbox{$p\to 1$}.
\begin{thm}
    Let $M\subseteq \mathbb{R}^d$ and $T:M\rightarrow M$ be as in \Cref{thm:dynplaptoone1}. Then the first eigenvalue $\dynamic{\lambda}_p(M, T)$ converges to
    $\dynamic{h}(M, T)$ as~$p\rightarrow 1$. 
    \label{thm:cheegerconv}
\end{thm}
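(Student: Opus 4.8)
The plan is to sandwich $\lim_{p\to 1}\dynamic{\lambda}_p(M,T)$ between two copies of $\dynamic{h}(M,T)$ by proving a lower and an upper bound separately.

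For the lower bound I would simply invoke the dynamic Cheeger inequality just established. \Cref{thm:cheegerdynamic} gives $\dynamic{\lambda}_p(M,T)\ge (\dynamic{h}(M,T)/p)^p$ for every $p\ge 1$, and the right-hand side is continuous at $p=1$ with value $\dynamic{h}(M,T)$: taking logarithms, $p\bigl(\log\dynamic{h}(M,T)-\log p\bigr)\to \log\dynamic{h}(M,T)$ as $p\to 1$. Hence $\liminf_{p\to 1}\dynamic{\lambda}_p(M,T)\ge \dynamic{h}(M,T)$.

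For the upper bound I would use the variational characterisation \eqref{eq:dyn_p_Sobolev_constant} together with the dynamic Federer--Fleming equality $\dynamic{h}(M,T)=\dynamic{s}(M,T)$ recalled in \cref{sec:prelims}. Fix any nonzero $u\in C^\infty_c(M)$; since $C^\infty_c(M)\subset W^{1,p}_0(M)$, the function $u$ is admissible, so $\dynamic{\lambda}_p(M,T)\le \dynamic{J}(u)=\tfrac12\bigl(\|\nabla u\|_p^p+\|\nabla T_*u\|_p^p\bigr)/\|u\|_p^p$. The key step is to let $p\to 1$ for this \emph{fixed} $u$. Because $T$ is a smooth volume-preserving diffeomorphism of the compact manifold $M$, the function $T_*u=u\circ T^{-1}$ is again smooth with compact support, so $|u|$, $|\nabla u|$ and $|\nabla T_*u|$ are all bounded and supported in fixed sets of finite volume. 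Using the pointwise bound $|f|^p\le 1+|f|^2$ (valid for $p\in(1,2]$) as an integrable majorant, dominated convergence gives $\int_M|f|^p\to\int_M|f|$ as $p\to 1$ for each of $f\in\{u,\nabla u,\nabla T_*u\}$; the denominator limit $\|u\|_1$ is strictly positive since $u\neq 0$. Therefore $\dynamic{J}(u)\to\bigl(\|\nabla u\|_1+\|\nabla T_*u\|_1\bigr)/(2\|u\|_1)$.

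To finish I would take $\limsup_{p\to 1}$ in $\dynamic{\lambda}_p(M,T)\le \dynamic{J}(u)$, obtaining $\limsup_{p\to 1}\dynamic{\lambda}_p(M,T)\le \bigl(\|\nabla u\|_1+\|\nabla T_*u\|_1\bigr)/(2\|u\|_1)$ for every nonzero $u\in C^\infty_c(M)$, and then pass to the infimum over such $u$. By the definition of $\dynamic{s}(M,T)$ in \eqref{eq:dynamic_sobolev_constant} and the dynamic Federer--Fleming equality this yields $\limsup_{p\to 1}\dynamic{\lambda}_p(M,T)\le \dynamic{s}(M,T)=\dynamic{h}(M,T)$. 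Combined with the lower bound, the limit exists and equals $\dynamic{h}(M,T)$. The argument is essentially routine once set up; the only point requiring care is the order of operations in the upper bound --- the convergence $\dynamic{J}(u)\to$ (the $1$-norm ratio) is established for each fixed $u$, and only afterwards is the infimum taken, so no uniformity over $u$ is needed. Accordingly, the main (modest) obstacle is verifying the dominated-convergence hypotheses, which the shared compact support and the smoothness of both $u$ and $T$ supply.
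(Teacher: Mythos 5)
Your proof is correct, and your upper bound takes a genuinely different route from the paper's. The lower bound is identical: both invoke \Cref{thm:cheegerdynamic} together with the continuity of $p \mapsto (\dynamic{h}(M,T)/p)^p$ at $p=1$. For the upper bound, the paper works with \emph{sets}: it chooses a smooth subdomain $\subsetname_k$ whose dynamic Cheeger ratio is within $1/k$ of $\dynamic{h}(M,T)$, builds the Lipschitz plateau function $f_\varepsilon$ equal to $1$ on $\subsetname_k$ and decaying linearly in an $\varepsilon$-collar, and takes three successive limits $p\to 1$, $\varepsilon\to 0$, $k\to\infty$; the delicate step is $\varepsilon\to 0$, where $\|\nabla f_\varepsilon\|_1 \to \ell_{d-1}(\partial \subsetname_k)$ and, via \cite[Lemma A.1 and Theorem 3.1]{froyland_dynamic_2015}, $\|\nabla T_* f_\varepsilon\|_1 \to \ell_{d-1}(\partial T\subsetname_k)$. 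You instead fix an arbitrary $u\in C^\infty_c(M)\setminus\{0\}$, pass $p\to 1$ in $\dynamic{J}(u)$ by dominated convergence (this coincides with the paper's first limit, applied to a general smooth $u$ rather than to $f_\varepsilon$; your majorant $1+|f|^2$ on the finite-volume $M$ and the observation $T_*u\in C^\infty_c(M)$ are in order), then take the infimum over $u$ to reach $\dynamic{s}(M,T)$, and finally invoke the dynamic Federer--Fleming equality $\dynamic{s}(M,T)=\dynamic{h}(M,T)$, which the paper recalls in \cref{sec:prelims} (Dirichlet case from \cite{froyland_robust_2018}). In effect, the paper's $\varepsilon\to 0$ and $k\to\infty$ limits re-derive the inequality $\dynamic{s}(M,T)\le \dynamic{h}(M,T)$ inside the proof, so your argument is shorter and more modular, outsourcing all the geometric measure theory to the cited equality, whereas the paper's version is self-contained modulo the single limit lemma from \cite{froyland_dynamic_2015}; both are legitimate within the paper's framework, and your care about the order of operations (limit in $p$ at fixed $u$ \emph{before} the infimum over $u$) is exactly right.
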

\begin{proof}
    We use a mixture of arguments from \cite[Corollary 6]{kawohl_isoperimetric_2003},
    and \cite[Lemma A.1 and Theorem 3.1]{froyland_dynamic_2015}.
    By \Cref{thm:cheegerdynamic} we already have a lower
    bound on $\dynamic{\lambda}_p(M, T)$ that converges to
    $\dynamic{h}(M, T)$ as~$p\rightarrow 1$. To
    construct an upper bound, consider a subdomain
    $\subsetname_k\subset M$ with smooth boundary not touching
    $\partial M$
    and
    \[
    \frac{\ell_{d-1}(\partial \subsetname_k)+\ell_{d-1}(\partial T \subsetname_k)}{2\ell_d(\subsetname_k)} -
    \dynamic{h}(M, T) <\frac1k \quad\text{for a $k\in\mathbb{N}$}.
    \]
    Define
    \begin{align}
        f_\varepsilon(x) :=
        \begin{cases}
            1 & x\in \subsetname_k\\
            1-\frac{1}{\varepsilon} d(x, \subsetname_k) &
            0<d(x, \subsetname_k) < \varepsilon\\
            0 & d(x, \subsetname_k) \geq \varepsilon\\
        \end{cases}
    \end{align}
    with $d(x, \subsetname_k):=\inf_{y\in \subsetname_k}|x-y|$.
    Then $f_{\varepsilon}$ is in $W_0^{1,p}(M)$ for $\varepsilon$ small enough and 
    \begin{equation}
        \dynamic{\lambda_p}(M, T) \leq
        \frac{\|\nabla f_{\varepsilon}\|_p^p +
              \|\nabla T_*f_{\varepsilon}\|^p_p}
              {2\|f_{\varepsilon}\|_p^p}=:
              C_{p, \varepsilon, k}.
    \label{eq:upperboundonlam}
    \end{equation}
    To remove the dependence on $p$, first
    note that $|\nabla f_{\varepsilon}|\leq G_\varepsilon$ for some
    constant $G_\varepsilon$ and $|\nabla f_{\varepsilon}|^p$ converges pointwise
    to $|\nabla f_{\varepsilon}|$ for $p\rightarrow 1$,
    hence by compactness of $M$ and dominated convergence~$ \lim_{p\to 1}\|\nabla f_{\varepsilon}\|_p^p =
    \|\nabla f_{\varepsilon}\|_1$.  Using that $T$ is continuously
    differentiable and applying the same argument to $\|\nabla T_*f_{\varepsilon}\|_p^p$ and $\|f_{\varepsilon}\|_p^p$ yields
    \[
        \lim_{p\rightarrow1} C_{p,\varepsilon, k} =
         \frac{\|\nabla f_{\varepsilon}\|_1 +
              \|\nabla T_*f_{\varepsilon}\|_1}
              {2\|f_{\varepsilon}\|_1}=:
              C_{\varepsilon, k}.
    \]
    
    Now $\lim_{\varepsilon\rightarrow 0}\|f_\varepsilon\|_1
    =\ell_{d}(\subsetname_k)$ by dominated convergence and
    $\lim_{\varepsilon\rightarrow0}\|\nabla f_{\varepsilon}\|_1=
    \ell_{d-1}(\partial \subsetname_k)$, as $|\nabla f_\varepsilon|=\frac1\varepsilon$ almost everywhere on $\Gamma_\varepsilon:=\{x\in M\backslash \subsetname\mid d(x,M)\leq \varepsilon\}$ and zero elsewhere so $\lim_{\varepsilon\rightarrow 0}\int_M|\nabla f_\varepsilon|=\lim_{\varepsilon \rightarrow 0}\ell_d(\Gamma_\varepsilon)/\varepsilon = \ell_{d-1}(\partial \subsetname)$ (see also proof of~\cite[Theorem 3.1]{froyland_dynamic_2015}). 
    Following the proof of \cite[Theorem 3.1]{froyland_dynamic_2015},
    using \cite[Lemma A.1]{froyland_dynamic_2015} we have
    $
        \lim_{\varepsilon\rightarrow 0}
        \|\nabla T_* f_{\varepsilon}\|_1 =
        \ell_{d-1}(T \partial \subsetname_k)
        = \ell_{d-1}(\partial T\subsetname_k)
    $,
    which leads us to 
    \[
        \lim_{\varepsilon\rightarrow0} C_{\varepsilon, k}  =
        \frac{\ell_{d-1}(\partial \subsetname_k) + \ell_{d-1}(\partial T\subsetname_k)}
        {2\ell_{d}(\subsetname_k)}
       =: C_k.
    \]
    Finally by definition of $\subsetname_k$ we have
    \begin{equation}
        \lim_{k\rightarrow \infty} C_k = \dynamic{h}(M, T).
        \label{eq:lim-of-ck}
    \end{equation}
    Now we can conclude by taking successive
    limits.
    By \eqref{eq:upperboundonlam}
    we have for fixed $k$ and~$\varepsilon$
    \[
        \limsup_{p\rightarrow 1}
        \dynamic{\lambda}_p(M, T) \leq C_{\varepsilon, k},
    \]
    which, after passing to the limit $\varepsilon \rightarrow 0$
    implies that for all~$k$
    \[
        \limsup_{p\rightarrow 1}
        \dynamic{\lambda}_p(M, T) \leq C_{k}.
    \]
    A final limit $k\rightarrow\infty$ together with
    \Cref{eq:lim-of-ck} shows
    \[
         \limsup_{p\rightarrow 1}
        \dynamic{\lambda}_p(M, T) \leq \dynamic{h}(M, T).
    \] Recalling that \Cref{thm:cheegerdynamic} gave us
    $
        \liminf_{p\rightarrow 1} \lambda_p(M, T) \geq
        \dynamic{h}(M, T)
    $ we can conclude that
    \[
       \lim_{p\rightarrow 1} \dynamic{\lambda}_p(M, T) =
       \dynamic{h}(M, T).
   \]
\end{proof}

\section{Numerical approximation of eigenfunctions}

Finding eigenfunctions of the $p$-Laplacian computationally is much harder for $p\neq 2$ than for~$p=2$.
In the latter case, the operator is the standard linear Laplacian and finite element methods can readily be applied~\cite{StrangFix:73}.  For the $p$-Laplacian,  Hor\'ak \cite{horak_numerical_2011} uses a modified gradient descent for the first eigenfunction and a mountain pass algorithm for the second one. Lefton and Wei~\cite{lefton_numerical_1997} use a penalty formulation together with the Levenberg--Marquard algorithm to approximate the first eigenfunction. 
For our purpose, we will modify a method by Yao and Zhou \cite{yao_numerical_2007}, as it is general enough
to be easily applied to our dynamical functional $\dynamic{F}$.
    
\subsection{Abstract algorithm for the first eigenpair}

We will use the algorithm mostly for the first eigenfunction. 
As it simplifies substantially in this case and is essentially a 
descent algorithm, we summarise this special case here. Recall that for the first eigenpair $(u_p,\lambda_p)$ of the $p$-Laplacian, we have 
\[
    \lambda_p = \inf_{u\in W^{1,p}_0(M)\backslash\{0\}} J(u)
\]
and the infimum is attained at the first eigenfunction $u_p$. 
For the descent algorithm, \cite{yao_numerical_2007} proposes to use the descent direction $-d\in W^{1,q}_0(M)$ that fulfils 
\begin{equation}
\label{eq:weakpseudogradeq1}
    \int_M \nabla d \cdot\nabla v = J'(u)(v)
\end{equation}
for all $v\in W^{1,p}_0(M)$. We will denote this descent direction
by $\grad J(u):=d$. In the case of 
$C^1$ boundary, it is shown in \cite[Theorem 7.5]{simader1972dirichlet}  that such a~$d$ exists.\footnote{To apply \cite[Thm. 7.5]{simader1972dirichlet}, set $B[\mathbf{u},\Phi]:=\int_M \nabla \mathbf{u}\nabla \Phi$, $F(\Phi) := J'(u)(\Phi)$ 
(using the respective notations $B, \mathbf{u}$ and $\Phi$ from \cite{simader1972dirichlet} and $u, F, J'$ from our definitions) and note that the set $N_q$ from the assumptions of \cite[Thm. 7.5.]{simader1972dirichlet} equals $\{0\}$, hence trivially $F(z)=0$ for all $z\in N_q$.} 
Note that because $M$ is bounded and $p<q$ we have $W^{1,q}_0(M)\subseteq W^{1,2}_0(M)\subseteq W^{1,p}_0(M)$, so we also get $d\in W^{1,p}_0(M)$. 
Note further that $J'(u)(d) = \|\nabla d\|_2^2$ and thus 
\begin{align}
   J(u-td)  &= J(u) - t\, J'(u) (d) + o(t) \\
                                & = J(u) - t\|\nabla d\|_2^2 + o(t) \\
                                & < J(u) - t\|\nabla d\|_2^2/4\label{eq:descentdirection}
 \end{align}
for small $t$, i.e.\ $-d$ is a descent direction and a suitable stepsize $t$ can be found by an Armijo-type stepsize search. 

The corresponding descent scheme is given in \Cref{alg:YaoZhouFirst}, which 
is based on the local min-max algorithm from \cite[Section 3]{yao_numerical_2007}, specialised for the first eigenfunction and with the modifications
from \cite[Section 4.1]{yao_numerical_2007} filled in.
\begin{algorithm}[H]
\caption{Computation of the first eigenfunction}\label{alg:YaoZhouFirst}
\begin{enumerate}
    \item Set $k=1$, choose an initial guess $u^{[1]}\in W^{1,p}_0(M)$ and a parameter $\alpha>0$.  
    \item While $\|\grad J(u^{[k]})\|_p$ is too large do    
    \begin{enumerate}
        \item Compute the descent direction $d^{[k]} := -\grad J(u^{[k]})$.
        \item For $s\in \mathbb{R}$ define
        \[
            u^{[k+1]}(s) :=\frac{u^{[k]} + sd^{[k]}}{\|u^{[k]} + sd^{[k]}\|_p}.
        \]
        \item Set $s^*:=\alpha/\max(1,\|d^{[k]}\|_p)$
            and check whether
        \[
            J(u^{[k+1]}(s^*)) \leq J(u^{[k]}) -\frac{s^*}{4}\|\nabla d^{[k]}\|_2^2
        \] 
        is fulfilled. If it is not, keep halving $s^*$ until it is.
        \item 
        Set $u^{[k+1]} := u^{[k+1]}(s^*)$.
        \item Increment $k\to k+1$. 
        \item If $s^*$ is smaller than some tolerance, go to Step 3. 
    \end{enumerate}
    \item Return $u_p:=u^{[k]}$.
\end{enumerate}
\end{algorithm}

\subsection{Application to the static and the dynamic $p$-Laplacian}
\label{sec:concrete_choices}

\paragraph{The functionals.}

For the static $p$-Laplacian $\Delta_p$ we have $F(u)=\|\nabla u\|_p^p$
and $G(u)=\|u\|_p^p$ so that the derivative of $J$ is
\begin{align}
    J'(u)(v) &= \frac{1}{G(u)^2}(G(u)F'(u)(v) - F(u)G'(u)(v))\\&=\frac{p}{b^2}\int_M(b|\nabla u|^{p-2}\nabla u \nabla v - a |u|^{p-2} uv),
\end{align}
where $a=\|\nabla u\|_p^p$ and $b=\|u\|_p^p$ \cite[Section 4.1]{yao_numerical_2007}. Hence, the descent direction $d$ is given by the solution of
\[
    \int_M\nabla d \cdot\nabla v = 
    \frac{p}{b^2}\int_M(b|\nabla u|^{p-2}\nabla u \nabla v
    - a u|u|^{p-2}v) \qquad \text{for all } v\in W_0^{1,p}(M).
\]
In case of the dynamic $p$-Laplacian $\dynamic{\Delta}_p$, we consider 
\[
    \dynamic{J}(u) := \frac{\dynamic{F}(u)}{G(u)} = \frac12\frac{F(u) + F(T_*u)}{G(u)}. 
\]
As in \eqref{eq:derivativeFbar} we have
\[
    (\dynamic{F})'(u)v = \frac12(F'(u)v + F'(T_*u)T_* v),
\]
so the derivative of $\dynamic{J}$ is 
\begin{align}
    (\dynamic{J})'(u)v &= \frac{1}{G(u)^2}(G(u)(\dynamic{F})'(u)v - 
    \dynamic{F}(u)G'(u)v) \\ 
    &= \frac{1}{b^2}\left(\frac12 b (F(u)v + F(T_*u)T_*v) 
            - \dynamic{a} G'(u)v\right)\\
    &=\frac{p}{b^2}\int_M\left(\frac12\left(|\nabla u|^{p-2}\nabla u \cdot \nabla v + |\nabla (u\circ T)|^{p-2}\nabla(u\circ T)\cdot\nabla(v\circ T)\right)- \dynamic{a} |u|^{p-2}u v \right), 
\end{align}
where $\dynamic{a}:= \frac1p\dynamic{F}(u)$. Defining $A(x):=DT^{-T}(x)$ and using volume-preservation of $T$ we make the replacement
\[
\int_M |\nabla (u\circ T)|^{p-2}\nabla(u\circ T)\cdot\nabla(v\circ T) = \int_M|A\nabla u|^{p-2}(A\nabla u)\cdot (A\nabla v).
 \]
 This yields 
 \[
    (\dynamic{J})'(u)v = 
    \frac{p}{b^2}\int_M\left(\frac12\left(|\nabla u|^{p-2}\nabla u \cdot \nabla v + |A\nabla u|^{p-2}(A\nabla u)\cdot (A\nabla v)\right)- \dynamic{a} |u|^{p-2}u v \right). 
\]

\paragraph{The descent direction.}

We found that in the case of the dynamic $p$-Laplacian, it is not advisable
to define $d$ using \eqref{eq:weakpseudogradeq1} directly replacing $J'$ with $(\dynamic{J})'$, as the resulting $d$ exhibits high irregularity in areas where the norm of the derivative of $T$ is large, leading to undesirably small steps in the descent algorithm, similar to the problems gradient descent exhibits when not conditioned properly. 

As a remedy, we instead use $\dynamic{d}$ defined by 
\begin{equation}
\label{eq:definitionbard}
    \int_M \nabla \dynamic{d}\cdot\left(\frac12(I+A^TA)\nabla v \right)= (\dynamic{J})'(u)v \qquad \text{for all } v\in W^{1,p}_0(M)
    \end{equation}
    and denote this choice by $\dynamic{\grad} \dynamic{J}:=\dynamic{d}$. Note that
    because 
    \[
         \Big\langle \ell, \frac12(I+A(x)^TA(x))\ell\Big\rangle \geq \frac12|\ell|^2
    \]
    for all $x\in M$ and $\ell\in\mathbb{R}^d$, and the coefficients of $A$ are bounded in $M$, the left-hand side of \eqref{eq:definitionbard} defines a uniformly strongly elliptic regular Dirichlet bilinear form of order $1$, so \cite[Theorem~7.5]{simader1972dirichlet} is again applicable if $M$ has $C^1$ boundary.
    
    In our experiments, this choice of $\dynamic{d}$ lead to higher regularity of the descent direction, resulting in considerably bigger step sizes 
(see \Cref{fig:pseudograds} for a visual comparison of $\grad \dynamic{J}$ and $\dynamic{\grad}\dynamic{J}$).  

\begin{figure}[ht]
    \centering
    \parbox{0.4\textwidth}
    {\includegraphics[width=0.4\textwidth]
    {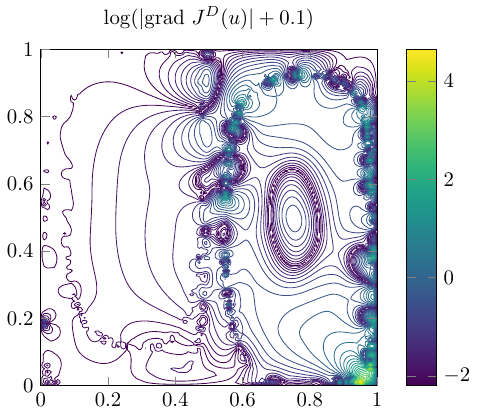}}
    \qquad
    \parbox{0.4\textwidth}
    {\includegraphics[width=0.4\textwidth]
    {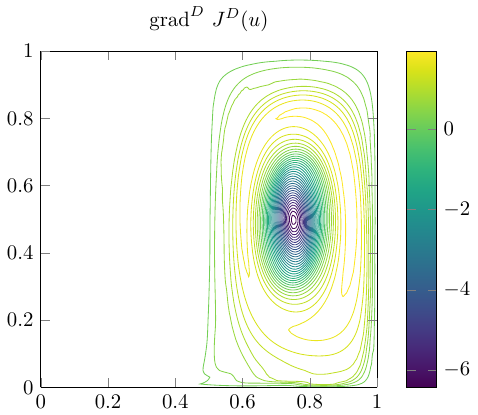}}
    \caption{The gradients $\grad \dynamic{J}(u)$ (left) and $\dynamic\grad \dynamic{J}(u)$  (right) at the starting point $u$ of \Cref{alg:YaoZhouFirst} for $p = 1.83$ in the example in \cref{sec:exp_rot_gyre}. As a starting point $u$ we choose the first eigenfunction of $\dynamic\Delta_2$.  Note the log transformation in the left plot, which was used in order to visualise the different orders of magnitude.}
    \label{fig:pseudograds}
\end{figure}

\paragraph{Initialization.}

Finally, one needs to find an initial guess for $u^{[1]}$  in the 
first step of \Cref{alg:YaoZhouFirst}. Yao and Zhou suggest 
finding functions with appropriate nodal line structures (see 
\cite[Remark 3.1(a)]{yao_numerical_2007}). We used the first eigenfunction of $\dynamic{\Delta}_2$ for this purpose, which we computed using the Cauchy--Green approach from \cite[Section 3.1]{froyland_robust_2018}.

\subsection{The second eigenpair}

One of our numerical examples in \cref{sec:experiments} is on the torus $\mathbb{T}^2$, which has no boundary. This has the consequence that the infimum of $\dynamic{J}$ is~$0$, realised by any constant function and carrying no information about the dynamics.  We thus need to compute the second eigenfunction and therefore briefly describe the algorithm of Yao and Zhou for this case and present our version for the dynamic $p$-Laplacian~$\smash{ \dynamic{\Delta}_p }$.

Note that the first eigenvalue $\lambda_p=\lambda_p^{(1)}$ of $\Delta_p$ is isolated \cite[Theorem 9]{lindqvist_nonlinear_2008}. As  the spectrum of $\Delta_p$ is closed~\cite[Theorem 3]{lindqvist_nonlinear_2008}, 
we may speak of the second eigenvalue $\lambda^{(2)}_p$ of $\Delta_p$ 
as the infimum over all eigenvalues bigger than~$\lambda_p^{(1)}$. We assume this to be true for the dynamic $p$-Laplacian $\smash{\dynamic{\Delta}_p}$ as well.
The second eigenvalue can be characterised by the min-max-principle \cite[Equation (2.1)] {yao_numerical_2007}(cf.\ Appendix~\ref{ssec:minmax} for a derivation in the linear case) 
\begin{equation}
\label{eq:minmax}
    \lambda^{(2)}_p = \min_{v\in L'}\max_{u\in [L, v] \cap S} J(u),   
\end{equation}
where $L$ is the span of the first eigenfunction $u_p=u_p^{(1)}$, $L'$ is a complement of $L$ (i.e.\ such that $W^{1,p}_0(M)=L\oplus L'$) and $[L,v]:=\spn(L, v)$.
By $S:= \{u\in W^{1,p}_0(M)\mid \|u\| = 1\}$, we denote the unit sphere in $W^{1,p}_0(M)$ and define $[L,v]_S:= \{t_1u_p^{(1)}+t_2v\mid t_1^2+t_2^2 = 1\}$.
Since $J(tu)=u$ for all $t\neq 0$, and $[L,v]_S \xrightarrow{{u\ \mapsto\ u/\|u\|}} [L,v]\cap S$ is a bijection, we have 
\begin{equation}
    \label{eq:equivalence}
\max\{J(u)\mid u\in [L,v]_S\}=\max\{J(u)\mid u\in [L,v]\cap S\}.
\end{equation}
We work directly with the left-hand side of (\ref{eq:equivalence}) as the optimisation is now finite-dimensional over $t_1,t_2$.
From a maximiser of the left-hand side of (\ref{eq:equivalence}) we can immediately obtain a maximiser of the right-hand side.

A local optimum of the maximisation in \eqref{eq:minmax} can then be found with a finite-dimensional optimisation over the~$t_i$ (see \cref{sec:exp_standard_map} for implementation details). 
If $u^+:L'\rightarrow W^{1,p}_0(M)$ is a \emph{peak selection} of $J$, meaning
\[
u^+(v) \in \arg\max\{J(u)\mid u\in [L,v]_S\},
\]
it is thus sufficient to find
\begin{align}
\label{eq:optimizationproblem_second}
    v_2:=\arg\min \{J(u^+(v)) \mid v\in L'\}
    \end{align}
to get a second eigenfunction $u_p^{(2)}:= u^{+}(v_2)$, i.e.\ $J(u_p^{(2)}) = \lambda_p^{(2)}$.  The minimisation problem \eqref{eq:optimizationproblem_second} can be solved with a similar descent scheme as \Cref{alg:YaoZhouFirst}:  The crucial observation is that
$-d$ can be used as a descent direction for $v\mapsto J(u^+(v))$ \cite[Lemma 2.5]{yao_numerical_2007}. Note that this is subtly different from the 
obvious application of \eqref{eq:descentdirection}
    which would be that ${\frac{d}{dt}J(u^+(v) -td)|_{t=0}<0}$, as opposed to ${\frac{d}{dt}J(u^+(v-td))|_{t=0}<0}$.
The corresponding scheme is given in \Cref{alg:yaozhousecond}. 

\begin{algorithm}[H]
\caption{Computation of a second eigenfunction}
\label{alg:yaozhousecond}
\begin{enumerate}
    \item Compute the first eigenfunction $u^{(1)}_{p}$ with \Cref{alg:YaoZhouFirst}.
    \item Set $k=1$, choose an initial guess 
    $v^{[1]}\in W^{1,p}_0(M)$ and a parameter $\alpha>0$. 
    \item While $\big \|\grad J(u^+(v^{[k]})) \big\|_p$ is too large do
    \begin{enumerate}
        \item Compute a descent direction $d^{[k]} := -{\rm sign}(t_2(v^{[k]}))\grad J(u^+(v^{[k]}))$.
        \label{alg:yaozhousecond3a}
    \item For $s\in\mathbb{R}$ define
        \[
            v^{[k+1]}(s) :=\frac{v^{[k]} + sd^{[k]}}{\|v^{[k]} + sd^{[k]}\|_p}.
        \]
        \item Set $s^*:=\alpha/\max(1,\|d^{[k]}\|_p)$ and check whether
            \[
                J(u^+(v^{[k+1]}(s^*))) \leq  J(u^+(v^{[k]})) -\frac{s^*}{4}|t_2(v^{[k]})|\|\nabla d^{[k]}\|_2^2
            \]
        is fulfilled. If it is not, keep halving $s^*$ until it is.
        \item 
        Set $v^{[k+1]} := v^{[k+1]}(s^*)$.
        \item Increment $k\to k+1$.
        \item If $s^*$ is smaller than some tolerance, go to Step 4. 
    \end{enumerate}
    \item Return $u_p^{(2)}:=u^+(v^{[k]})$.
\end{enumerate}
\end{algorithm}

\section{Experiments}
\label{sec:experiments}

We perform a series of experiments to illustrate  how the eigenfunctions of the dynamic $p$-Laplacian -- and in particular their level sets with optimal Cheeger ratio -- compare to those of the $2$-Laplacian.
We choose a variety of domains:  fully periodic, partially periodic, and non-periodic, and a variety of dynamics with differing levels of symmetry.
We find that the Cheeger ratios of many superlevel sets of the first eigenfunction improve considerably for $p\rightarrow 1$, indicating that the dynamic 
$p$-Laplacian behaves similarly to \Cref{thm:plaptoone}(\ref{thm:plaptoone_b}). 
We also find that the level set of the leading eigenfunction of the $p$-Laplacian with \emph{least Cheeger ratio} is relatively insensitive to $p$ for $1.3\le p\le 2$;  this finding is replicated for the dynamic $p$-Laplacian.
The code for replicating our results can be found in the accompanying package \href{https://github.com/adediego/DynamicPLaplacian.jl}{\texttt{DynamicPLaplacian.jl}}.

\subsection{The static unit square}
\label{sec:exp_static}

We first look at $M = [0,1]^2$ and $T=\mathrm{id}$.
We get the functional corresponding to the classical Cheeger
problem
\[
    J(u) = \frac{\|\nabla u\|_p^p}{\|u\|_p^p}.
\]
The theory predicts that for $p\rightarrow 1$ the first eigenvalue $\lambda_p$ converges to the Cheeger constant
\[
    h(M) = \inf_{\subsetname\subset M}\frac{\ell_{1}(\partial \subsetname)}{\ell_2(\subsetname)} = \frac{4-\pi}{2-\sqrt\pi} \approx3.772,
\]
(see \cref{sec:effect_of_relaxation}), while the Cheeger set is a square with rounded corners of radius~$1/h(M)\approx 0.265$. 

We approximate the first eigenfunction $u_p$ of $-\Delta_p$ with
Dirichlet boundary conditions and investigate how the Cheeger ratio of its level sets changes with $p$. For the discretisation of the eigenproblem \eqref{eq:dynamic_eigenproblem} we use a finite element method with linear triangular Lagrange elements on a $100\times100$ grid, implemented using the package \texttt{Gridap.jl} \cite{Badia2020}. For the stopping criterion in \Cref{alg:yaozhousecond},
we use
$\|\dynamic{\grad}\ \dynamic{J}\|_p \leq 10^{-3}$.
Note that in this example there is no
dynamics, i.e.\ we set $T:=id$, and thus $\dynamic\grad \dynamic{J}=\grad J$.
While for $p$ close to $2$ this is generally achieved in  a few iterations, the  number of iterations grows quickly as $p$ gets close to 
$1$; see Figure~\ref{fig:iterations}.  We restrict the presentation of results
to the range of $p$ where the convergence criterion
was achieved, namely $1.3\leq p\leq 2.0$.
\begin{figure}[ht]
    \center
    \includegraphics[height=8cm]{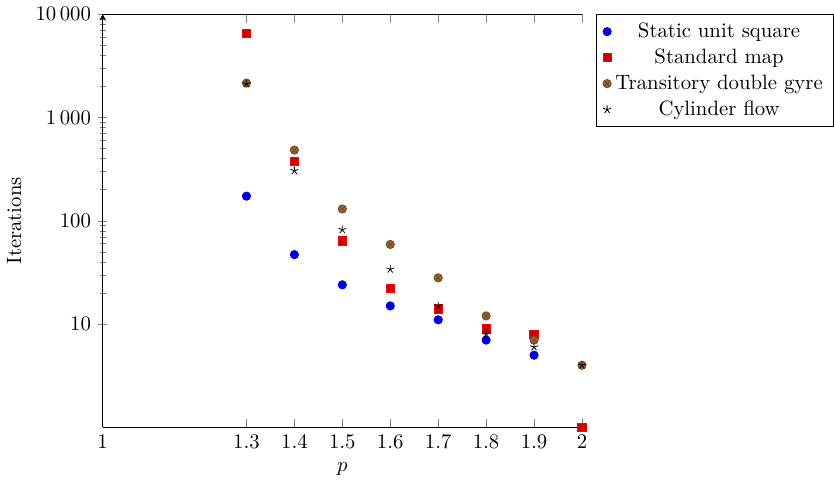}
    \caption{
    Number of iterations needed to achieve the convergence
    criterion ${\|\grad J\|_p \leq 10^{-3}}$ 
    (or ${\|\dynamic\grad \dynamic{J}\|_p \leq 10^{-3}}$ 
    respectively).
    For details see sections \ref{sec:exp_static} (static unit square),
    \ref{sec:exp_rot_gyre} (transitory double gyre), 
    \ref{sec:exp_standard_map} (standard map) 
    and \ref{sec:exp_cylinder} (cylinder flow). 
    }
    \label{fig:iterations}
\end{figure}

In Figure \ref{fig:static_unit_square_efuns}, we plot the leading eigenfunction of $\Delta_p$ for three values of $p$ (upper row) and their level sets (lower row). The level sets were obtained by the marching squares algorithm implemented in \texttt{Contour.jl} \cite{Contour}.  The first eigenfunction $u_p$ does indeed get \enquote{flatter} with smaller $p$, and the level sets get closer to each other. 

\begin{figure}
\begin{tabular}{ c c c }
    \includegraphics
    [width=5cm, trim ={0 0 1.5cm 0}, clip] {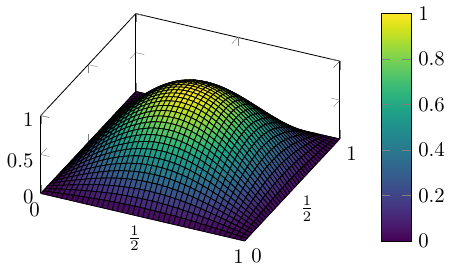} &
    \includegraphics
    [width=5cm, trim ={0 0 1.5cm 0}, clip] {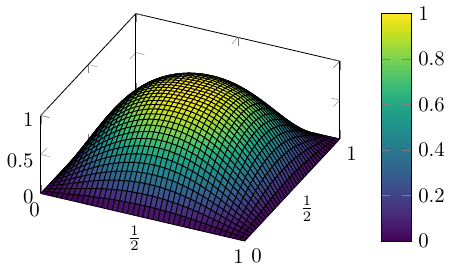} &
    \includegraphics[width=5cm, trim ={0 0 1.5cm 0}, ]{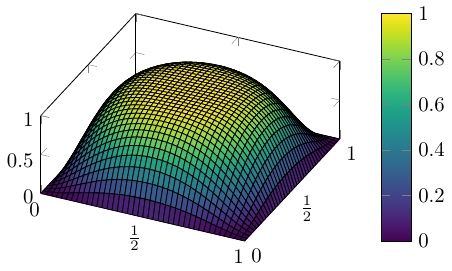}\\
    \includegraphics[width=4cm]{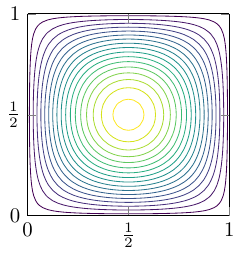} &
    \includegraphics[width=4cm]{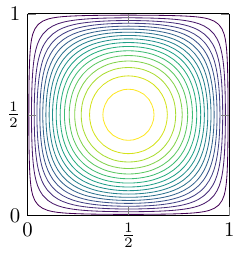} &
    \includegraphics[width=4cm]{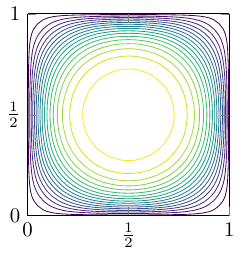}
\end{tabular}
\caption{The first eigenfunction of $\Delta_p$  
for  $p=2.0$ (left), $p=1.6$ (centre) and $p=1.3$ (right)
on  the unit square.}
\label{fig:static_unit_square_efuns}
\end{figure}

We next compute the Cheeger ratio for specific level sets of the first eigenfunction.  For the computation of the area of the superlevel sets we used the well-known formula sometimes called the \emph{shoelace formula} (see e.g. \cite[3.5 (6.)]{rade_mathematics_2004}. If the approximation of some eigenfunction is negative somewhere, it can happen that a level set corresponding to a negative value has a very small Cheeger ratio. We thus only consider level sets corresponding to positive values in such cases. We observe in~\Cref{fig:static_unit_square_statistics} that the Cheeger ratio of every single level set improves with decreasing $p$, while the Cheeger ratios of the best level sets barely change with varying~$p$.
To quantify the improvement, we investigate the median of the Cheeger ratio of the level sets with respect to the uniform distribution on the range of the eigenfunction. 
\begin{figure}
    \centering
    \begin{tabular}{c c c}
        \includegraphics[height=4.5cm]{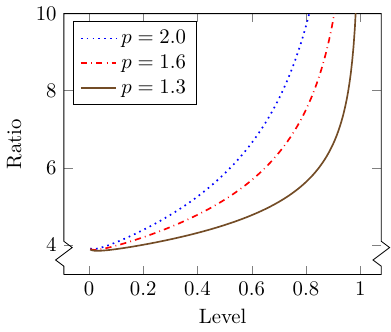}&
        \includegraphics[height=4.4cm]{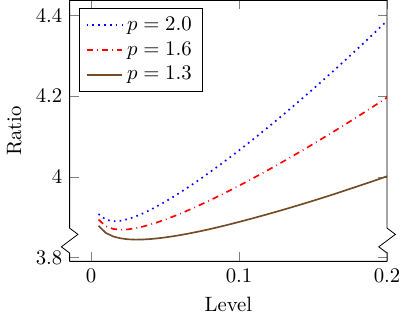}&
        \includegraphics[height=4.4cm]{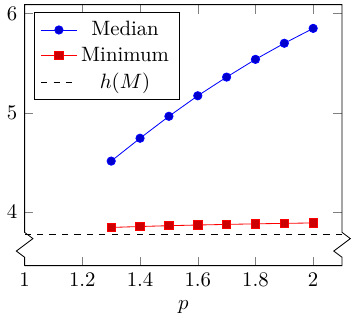}
    \end{tabular}
    \caption{Cheeger ratio of level sets of the first eigenfunction of $\Delta_p$ (left, closeup in the middle) on the unit square and statistics of the ratio when choosing the level set randomly (right). Note that the ratio does not depend monotonically on the level.
}
    \label{fig:static_unit_square_statistics}
\end{figure}
Finally, we check the best level set for different $p$ and compare
it to the boundary of the actual Cheeger set in \Cref{fig:static_unit_square_cheeger}.
\begin{figure}[ht]
    \centering
    \begin{tabular}{c c}
        \includegraphics[height=5cm]{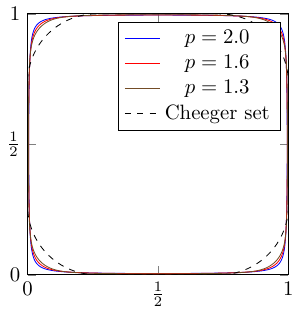}
        \includegraphics[height=5cm]{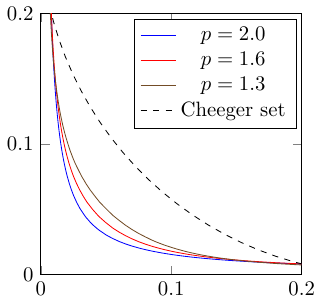}
    \end{tabular}
    \caption{The level set with the smallest  Cheeger ratio for different $p$ versus the actual Cheeger set (left) on the unit square. Closeup of the lower left corner of the square (right).}
    \label{fig:static_unit_square_cheeger}
\end{figure}

\subsection{The transitory double gyre}
\label{sec:exp_rot_gyre}

As a first example with dynamics we use the transitory double gyre introduced in~\cite{mosovsky_transport_2011}. We consider the time one flow map of the non-autonomous differential equation $(\dot x, \dot y) = (\partial_y \psi, -\partial_x \psi)$ on $M=[0,1]^2$ defined by the stream function
\begin{align*}
   \psi(x,y,t) = (1-s(t))\psi_P(x,y) + s(t)\psi_F(x,y) 
\end{align*}
with $\psi_P(x,y) = \sin(2\pi x) \sin(\pi y)$ and $\psi_F(x,y) = \sin(\pi x) \sin(2\pi y)$ and
\[
    s(t) = \begin{cases} 0 & \text{ for } t<0, \\
                         t^2(3-2t) & \text{ for } t\in [0,1], \\
                         1 & \text{ for } t > 1.
            \end{cases}
\]
In this flow, there are two vertically elongated gyres next to each other at $t=0$ and these smoothly transition via rotation by $90$ degrees to two horizontally elongated ones at ${t=1}$. We integrate the flow with the Tsitouras 5/4 Runge--Kutta method implemented in the package \texttt{DifferentialEquations.jl}  \cite{rackauckas2017differentialequations} using a tolerance of $10^{-7}$ and approximate the Cauchy--Green tensor with automatic differentiation, 
for which we use \texttt{ForwardDiff.jl}\  
 \cite{RevelsLubinPapamarkou2016}.

The dynamic $p$-Laplacian, the first eigenfunction $\dynamic{u_p}$, and its level sets are approximated as in the previous example. Note that the dynamic $p$-Laplacian acts on functions at the initial time $t=0$.
As in the static case, $u_p$ becomes flatter at its extremum for decreasing $p$; see~\Cref{fig:double_gyre_up}.  In Figure~\ref{fig:double_gyre_statistics}, individual level sets and their statistics are shown, as well as the optimal level sets for three different values of $p$. The overall shape of the optimal set does not vary significantly with $p$.

\begin{figure}[H]
\begin{tabular}{ c c c }
\includegraphics[width=5cm, trim={0 0 1.5cm 0}, clip]{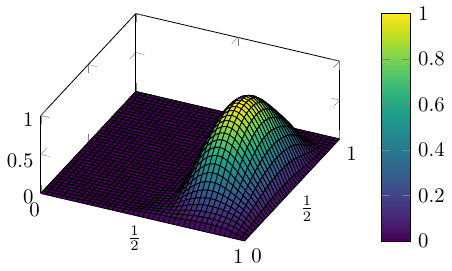} &
\includegraphics[width=5cm, trim={0 0 1.5cm 0}, clip]{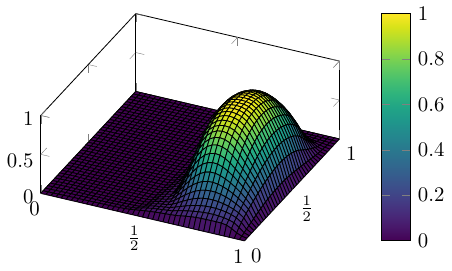} &
\includegraphics[width=5cm, trim={0 0 1.5cm 0}]{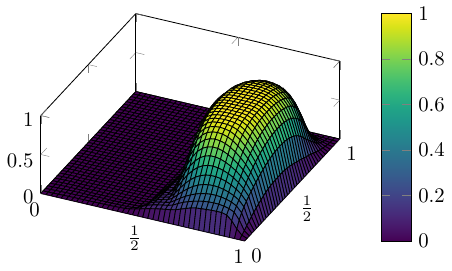}\\
\includegraphics[width=4cm]{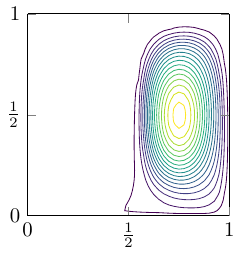} &
\includegraphics[width=4cm]{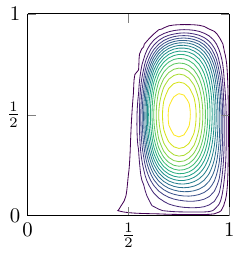} &
\includegraphics[width=4cm]{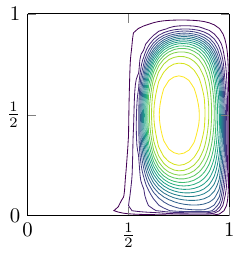}
\end{tabular}
\caption{The first eigenfunction of $\dynamic{\Delta}_p$ for the transitory
double gyre and
$p=2.0$ (left), $p=1.6$ (centre) and $p=1.3$ (right).}
\label{fig:double_gyre_up}
\end{figure}

\begin{figure}[H]
    \centering
    \includegraphics[width=0.33\textwidth,valign=t]{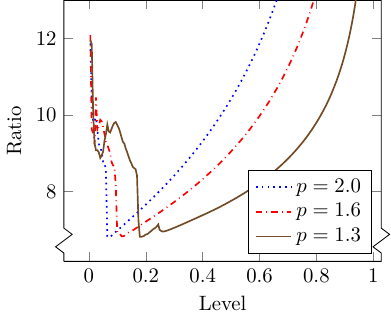}
    \hfil
    \includegraphics[width=0.31\textwidth,valign=t]{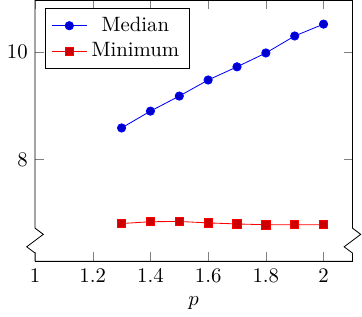}
    \\
    \includegraphics[width=0.27\textwidth,valign=t]{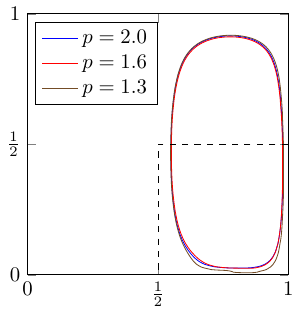}
    \hfil
    \includegraphics[width=0.27\textwidth, valign=t]{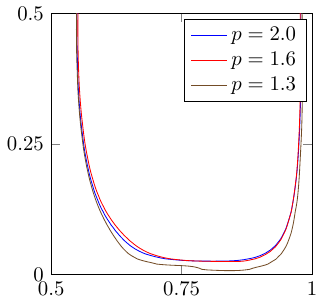}
    \caption{The dynamic Cheeger ratio of level sets of the first eigenfunction of $\dynamic{\Delta_p}$ for the transitory double gyre (top left), statistics of the ratio when choosing the level set randomly (top right) and the level set with the lowest Cheeger ratio for different $p$ (bottom left, closeup bottom right).   }
    \label{fig:double_gyre_statistics}
\end{figure}

\subsection{The cylinder flow}
\label{sec:exp_cylinder}

As a more complex example we look at the flow of the vector field 
\begin{align*}
    \dot x(t) &= c - A(t) \sin(x-\nu t) \cos(y) + \varepsilon \Gamma(g(x,y,t))\sin(t/2)\\
    \dot y(t) &= A(t)\cos(x-\nu t) \sin(y)
\end{align*}
on the domain $M = 2\pi S^1 \times [0,\pi]$, where $A(t) = 1 + \sin(2\sqrt{5}t)/8$, $\Gamma(\psi) = 1/(\psi^2+1)^2$, $g(x,y,t) = \sin(x-\nu t) \sin(y) +y/2-\pi/4$, $c=0.5$, $\nu=0.5$, and $\varepsilon=0.25$, cf.~\cite[Section B]{froyland_fast_2015}.

As the flow time we choose $T=40$; the time integration is performed  as in the transitory double gyre example.
The dynamic $p$-Laplacian is discretised on a $200\times 100$ grid. Its leading eigenfunction and its level sets are constructed as in the previous examples (see \cref{sec:exp_static} for details). As the domain is 
periodic in one coordinate, the shoelace formula for the area
of a polygon does not hold anymore, and we instead determine the area  of a superlevel set by
counting the number of vertices of a regular $200\times 100$ grid that lie in 
it.

The leading eigenfunctions $u^{(1)}_p$ of $\dynamic{\Delta_p}$ for $p\in \{2.0, 1.6, 1.3\}$ are depicted in \Cref{fig:cylinder_flow_eigfunction}, and the dynamic 
Cheeger ratios of the superlevel sets of the $u^{(1)}_p$ and their statistics are shown
in \Cref{fig:cylinder_flow_ratios}. For the respective
superlevel sets with minimal dynamic Cheeger ratio see 
\Cref{fig:cylinder_flow_bestsets}.
\begin{figure}[H]
\begin{tabular}{ c c c }
\includegraphics[width=5cm, trim={0 0 1.5cm 0}, clip]{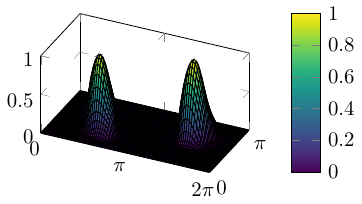} &
\includegraphics[width=5cm, trim={0 0 1.5cm 0}, clip]{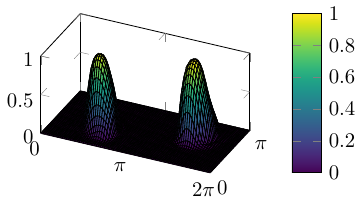} &
\includegraphics[width=5cm, trim={0 0 1.5cm 0}]{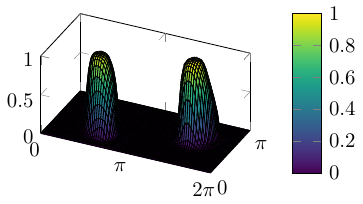}\\
\includegraphics[width=4cm]{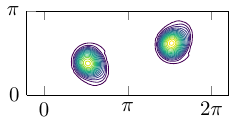} &
\includegraphics[width=4cm]{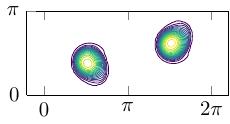} &
\includegraphics[width=4cm]{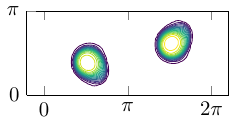}
\end{tabular}
\caption{
First eigenfunction of $\dynamic{\Delta}_p$ for
the cylinder flow and 
$p=2.0$ (left), $p=1.6$ (centre) and $p=1.3$ (right).}
\label{fig:cylinder_flow_eigfunction}
\end{figure}

\begin{figure}[H]
    \centering
    \includegraphics[height=5.2cm]{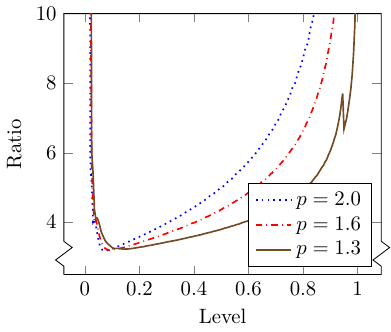}
    \hfil
    \includegraphics[height=5cm]{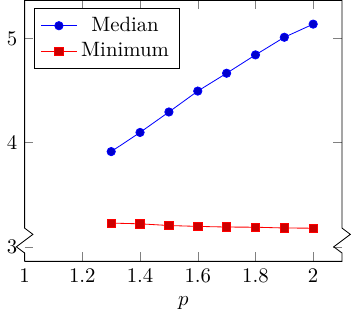}
    \caption{
        Dynamic Cheeger ratio of level sets of the first eigenfunction (left) and statistics of the ratio when choosing the level set randomly (right) 
        for the cylinder flow.}
        \label{fig:cylinder_flow_ratios}
\end{figure}

\begin{figure}[H]
    \centering
    \includegraphics[height=4cm]{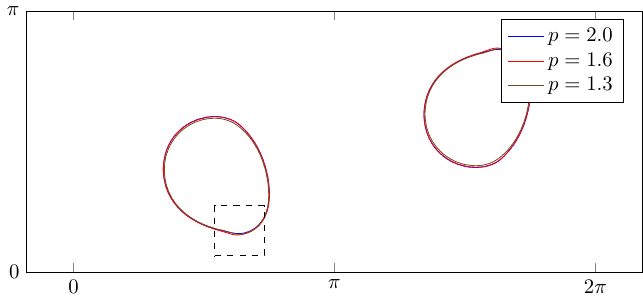}
    \includegraphics[height=4cm]{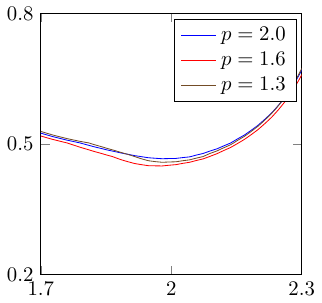}
    \caption{
    Level set with the smallest Cheeger ratio for different $p$ for the cylinder flow (on the right a closeup of the region indicated in the left plot).}
        \label{fig:cylinder_flow_bestsets}
\end{figure}

\subsection{The standard map}
\label{sec:exp_standard_map}

As an exploratory example, we consider the standard map on the flat $2$-torus $\mathbb{T}^2$
\[
    T(x,y) = (x+y+a \sin(x), y+ a\sin(x)) \;\; (\mathrm{mod} \;2\pi)
\]
with $a = 0.971635$ as in~\cite{froyland_fast_2015}. Since the boundary is empty here, the first eigenfunction $u_p^{(1)}$ is constant.  We thus consider  the second eigenfunction $u_p^{(2)}$ and use \Cref{alg:yaozhousecond}  to compute it. 
In order to ensure solvability of  \cref{eq:definitionbard} we add a zero-mean condition $\int_M v = 0$ in step \ref{alg:yaozhousecond3a}. For convenience, we also add a zero-mean condition $\int_M \dynamic{d} = 0$ in order to ensure uniqueness of the solution~$\dynamic{d}$. 
For boundaryless domains, the relevant dynamic Cheeger constant is not (\ref{eq:dynCheeger}), but instead the quantity
\[
   \dynamic{h}_{\rm Neum}(M,T):=
   \inf_{A\subset M}
    \frac{\ell_{d-1}(\partial \subsetname) + \ell_{d-1}(\partial T\subsetname)}
         {2\min\{\ell_d(\subsetname), \ell_d(M\backslash \subsetname)\}}, 
\]
see\ \cite{froyland_dynamic_2015}.
We compute areas as in the previous example.
The low-dimensional optimisation problem that arises in the computation of $u^+(v)$ in \Cref{alg:yaozhousecond} was solved to local optimality with the L-BFGS method constrained to the unit sphere in $\mathbb{R}^2$, as described in \cite{absil2009optimization} and implemented in \texttt{Optim.jl} \cite{mogensen2018optim}. As recommended in \cite{yao_numerical_2007}, we use the $t_i$ from 
the last iteration to initialise the optimisation 
for the next iteration to promote continuity of $u^+$. 
Otherwise, the computations have been done as in 
\cref{sec:exp_static} and \cref{sec:exp_rot_gyre}.
As vertical translations and $T$ are volume-preserving and their Jacobian does not depend on $y$, $\dynamic{F}, G$ and thus $\dynamic J$ are invariant under vertical translations. To deal with the resulting non-uniqueness of eigenfunctions, we shift an eigenfunction such that its maximum has $y$-coordinate~$\pi$; this makes the  eigenfunctions more easily comparable. 

In \Cref{fig:standard_map_efuns}, the second eigenfunction is shown for different $p$. We again observe that the eigenfunction becomes flatter around its maximum as $p$ decreases.  Similarly to the previous examples, the statistics of the Cheeger ratio $\dynamic{h}_{\rm Neum}(M,T)$ are given in \Cref{fig:standard_map_statistics}.
\begin{figure}[H]
\begin{tabular}{ c c c }
\includegraphics[width=5cm, trim={0 0 1.5cm 0}, clip]{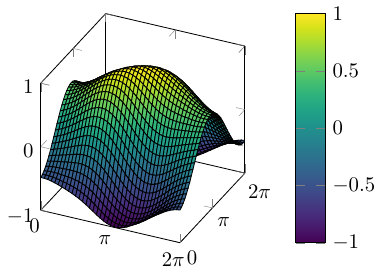} &
\includegraphics[width=5cm, trim={0 0 1.5cm 0}, clip]{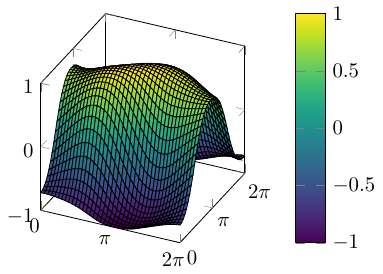} &
\includegraphics[width=5cm, trim={0 0 1.5cm 0} ]{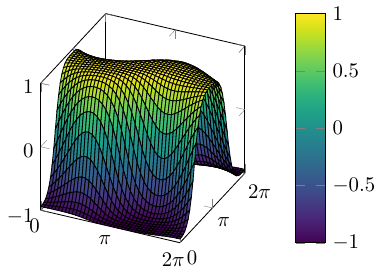}\\
\includegraphics[width=4cm]{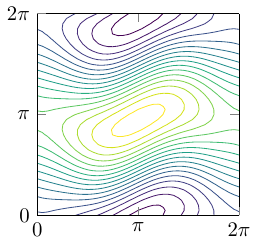} &
\includegraphics[width=4cm]{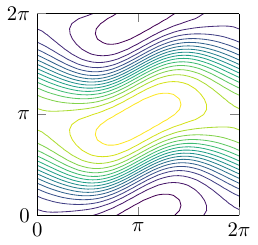} &
\includegraphics[width=4cm]{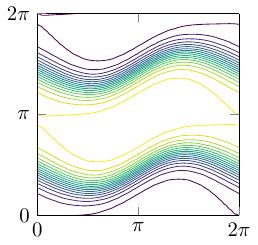}
\end{tabular}
\caption{Second eigenfunction of $\dynamic{\Delta}_p$ for the
standard map and
$p=2.0$ (left), $p=1.6$ (centre) and $p=1.3$ (right)}.
\label{fig:standard_map_efuns}
\end{figure}

\begin{figure}[H]
    \centering
    \includegraphics[width=0.32\textwidth]{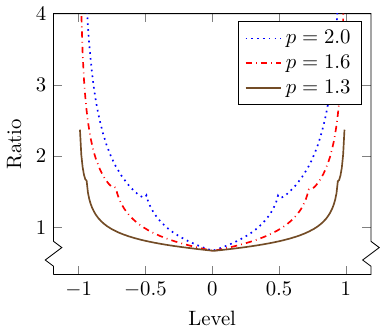}
    \includegraphics[width=0.31\textwidth]{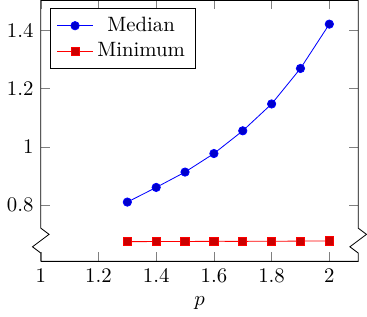}
    \includegraphics[width=0.28\textwidth]{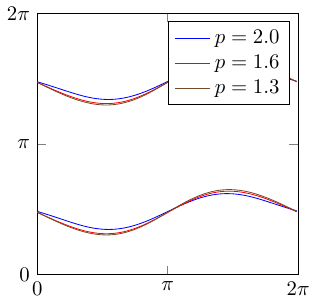}
    \caption{Dynamic Cheeger ratio of level sets of the second eigenfunction
    of $\dynamic{\Delta_p}$ for the standard map (left), statistics of the ratio when choosing the level set randomly (center) and the level set with the lowest Cheeger ratio for different $p$ (right).}
    \label{fig:standard_map_statistics}
\end{figure}

\section{Conclusions and outlook}

We introduced a dynamic version of the nonlinear $p$-Laplace operator -- denoted $\dynamic{\Delta}_p$ -- suitable for the spectral analysis of dynamical systems.
In \Cref{thm:dyn_eigenvalue} we proved the existence of a positive leading eigenvalue $\dynamic{\lambda}_p$ for $-\dynamic{\Delta}_p$ with homogeneous Dirichlet boundary conditions.
Associated to the dynamic spectral geometry of the dynamical system is the dynamic Cheeger constant $\dynamic{h}(M,T)$ defined in \eqref{eq:dynCheeger}, which quantifies how well the boundary of a carefully chosen subset in $M$ can resist growth under evolution of the dynamics.
In \Cref{thm:cheegerdynamic} we developed a Cheeger-type inequality, relating $\dynamic{\lambda}_p$
and the dynamic Cheeger constant for $p\ge 1$.
\Cref{thm:cheegerconv} showed that this inequality becomes increasingly tight as $p$ approaches 1, and in fact that~$\lim_{p\to 1}\lambda^D_p=h^D(M,T)$.

We then turned our attention to the numerical approximation of the dynamic $p$-Laplacian and its eigenfunctions. 
We proposed an algorithm -- modifying a similar algorithm for the classical $p$-Laplacian due to Yao \& Zhou~\cite{yao_numerical_2007} -- to solve the nonlinear eigenproblems for $1<p<2$, and compared the algorithm performance and solution quality for $p$ between 1 and 2 on a variety of examples. 
We found that (i) the level sets of the leading eigenfunctions tended to increasingly concentrate around the boundary of the optimal coherent set as $p\to 1$ and (ii) the optimal level set for the dynamic 2-Laplacian was often not very different from the optimal level set of the dynamic $p$-Laplacian for $p$ close to~1.
This first finding suggests that the ``sharper'' eigenfunctions of the dynamic $p$-Laplacian may be easier to use to find coherent sets than those of the dynamic $2$-Laplacian, including using multiple eigenfunctions post-processed via feature-separation algorithms such as~\cite{FrRoSa19}.
The second finding suggests that not too much is lost in terms of solution quality (where coherent set identification is quantified via $h^D(M,T)$) by using eigenfunctions of the dynamic 2-Laplacian.

There are numerous aspects of this work that could be addressed in the future. 
One practical motivation for the current work was that (first) eigenfunctions of $\Delta_p$ converge to characteristic functions of the Cheeger set. Although our results deliver positive empirical evidence, such a result for the dynamic $p$-Laplacian $\dynamic{\Delta}_p$ has yet to be proven.
Also, the uniqueness and positivity of the first eigenfunction of~$\dynamic{\Delta}_p$ is not yet established, as discussed in \Cref{rem:uniq}.

Finally, we have considered homogeneous Dirichlet boundary conditions, which is consistent with looking for sets $\subsetname \subset M$ in the isoperimetric problem \eqref{eq:cheeger} that have a compact closure in~$M$. 
Neumann boundary conditions, on the other hand, translate to problems where $\partial \subsetname$ can touch~$\partial M$. 
Results concerning Neumann boundary conditions for $\Delta_p$ are less well developed, but extensions of our results to the dynamic $p$-Laplacian under Neumann boundary conditions are natural to pursue.

\section*{Acknowledgements}
AD gratefully acknowledges support through the Bavarian Ministry of Science and Art. The research of GF is partially supported by an Australian Research Council Discovery Project, a Universities Australia 
 Australia-Germany Joint Research Cooperation Scheme, and an Einstein Visiting Fellowship to the Freie Universität Berlin. GF is grateful to the Departments of Mathematics at the Freie Universität Berlin and the University of Bayreuth for their generous hospitality. PK has been partially supported by the Deutsche Forschungsgemeinschaft (DFG) through grant CRC 1114 ``Scaling Cascades in Complex Systems'', Project Number 235221301, Project A08 ``Characterization and prediction of quasi-stationary atmospheric states’’. 

\appendix

\section{Appendix}

\subsection{On the min-max theorem}
\label{ssec:minmax}
\begin{thm}[Courant--Fischer]
\label{thm:Courant-Fischer}
Let $A\in \R^{d\times d}$ be symmetric.  The eigenvalues $\lambda^{(1)}\leq \lambda^{(2)} \leq \cdots \leq \lambda^{(d)}$ of $A$ are given by 
\[
\lambda^{(k)} = \min\left\{\max \left\{ \< u, Au\> \mid u\in V\cap S \right\} \mid V\in V_k \right\},
\]
where $V_k$ is the set of $k$-dimensional subspaces of $\R^d$ and $S$ is the unit sphere in $\R^d$.
\end{thm}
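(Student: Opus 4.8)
The plan is to invoke the spectral theorem and reduce the identity to two matching inequalities. First I would fix an orthonormal basis $v_1,\dots,v_d$ of $\R^d$ consisting of eigenvectors of $A$, ordered so that $Av_i=\lambda^{(i)}v_i$ with $\lambda^{(1)}\le\cdots\le\lambda^{(d)}$; such a basis exists precisely because $A$ is symmetric. Writing an arbitrary unit vector as $u=\sum_i c_iv_i$ with $\sum_i c_i^2=1$, orthonormality gives the key identity $\<u,Au\>=\sum_i\lambda^{(i)}c_i^2$, which displays the Rayleigh quotient as a convex combination of the eigenvalues. The entire argument then rests on bounding such convex combinations above and below over suitable subspaces.

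For the inequality $\lambda^{(k)}\ge\min_{V\in V_k}\max_{u\in V\cap S}\<u,Au\>$, I would test the outer minimum with the single subspace $V_*:=\spn(v_1,\dots,v_k)$, which lies in $V_k$. For $u\in V_*\cap S$ only the coefficients $c_1,\dots,c_k$ are nonzero, so $\<u,Au\>=\sum_{i=1}^k\lambda^{(i)}c_i^2\le\lambda^{(k)}\sum_{i=1}^k c_i^2=\lambda^{(k)}$. Taking the maximum over such $u$ and then the minimum over $V$ yields the inequality.

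The reverse inequality $\lambda^{(k)}\le\min_{V\in V_k}\max_{u\in V\cap S}\<u,Au\>$ carries the real content. Here I would fix an arbitrary $V\in V_k$ and exhibit one unit vector $u\in V$ with $\<u,Au\>\ge\lambda^{(k)}$. The tool is a dimension count: setting $W:=\spn(v_k,v_{k+1},\dots,v_d)$, which has dimension $d-k+1$, the subspaces $V$ and $W$ of $\R^d$ satisfy $\dim(V\cap W)\ge\dim V+\dim W-d=k+(d-k+1)-d=1$, so $V\cap W$ contains a unit vector $u$. For this $u$ only the coefficients $c_k,\dots,c_d$ survive, giving $\<u,Au\>=\sum_{i=k}^d\lambda^{(i)}c_i^2\ge\lambda^{(k)}$, whence $\max_{u\in V\cap S}\<u,Au\>\ge\lambda^{(k)}$ for \emph{every} $V\in V_k$, and therefore the minimum over $V$ is at least $\lambda^{(k)}$. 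Combining the two inequalities forces equality.

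The step I expect to be the main obstacle is the dimension-counting guarantee that $V\cap W\neq\{0\}$ with the specific choice $W=\spn(v_k,\dots,v_d)$. This is what pins down the index $k$ and makes the min--max collapse to exactly $\lambda^{(k)}$; choosing $W$ any larger or smaller would break one of the two bounds. Everything else amounts to the bookkeeping of convex combinations, which is routine once the eigenbasis identity $\<u,Au\>=\sum_i\lambda^{(i)}c_i^2$ is established.
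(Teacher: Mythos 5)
Your proof is correct and complete: the spectral decomposition, the test subspace $\spn(v_1,\ldots,v_k)$ for the bound $\min\max\le\lambda^{(k)}$, and the Grassmann dimension count $\dim(V\cap W)\ge \dim V+\dim W-d=1$ with $W=\spn(v_k,\ldots,v_d)$ for the reverse bound constitute exactly the standard argument for Courant--Fischer, and the maximum over $V\cap S$ exists by compactness, so every step goes through. Note, however, that the paper states this theorem \emph{without} proof, as a classical fact recalled only as input to Proposition~\ref{prop:CF_complement} (whose in-paper proof reuses the same convex-combination bookkeeping $\langle u, Au\rangle=\sum_i\lambda^{(i)}t_i^2$ that underlies your upper-bound step, but for the restricted family of subspaces $[L_{k-1},v]$); so there is no in-paper proof to compare against, and your argument correctly supplies the omitted one.
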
 

\begin{prop}
\label{prop:CF_complement}
In the setting of Theorem~\ref{thm:Courant-Fischer}, if $u_k$ denotes the $\|\cdot\|_2$-normalised eigenvector at $\lambda^{(k)}$, $L_k=\spn\left(u_1,\ldots,u_k\right)=:[u_1,\ldots,u_k]$, and $L_k'$ some fixed complement of $L_k$, then
\[
\lambda^{(k)} = \min \left\{\max \left\{ \<u, Au\> \mid u \in [L_{k-1}, v] \cap S \right\} \mid v\in L_{k-1}' \right\}.
\]
Moreover, if $v_k$ denotes a minimiser of the above expression, then 
\[
u_k \in \arg\max \big\{ \<u, Au\> \mid u \in [L_{k-1}, v_k] \cap S \big\}.
\]
\end{prop}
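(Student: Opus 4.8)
The plan is to derive the statement from the Courant--Fischer characterisation in \Cref{thm:Courant-Fischer}, the only additional content being that restricting the competing subspaces to those of the form $[L_{k-1},v]$ (rather than all $k$-dimensional subspaces) does not change the minimum, and that at a minimiser the maximising direction is the eigenvector $u_k$. The workhorse for both points is an orthogonal decomposition of a competitor subspace: for $v\notin L_{k-1}$ the span $V:=[L_{k-1},v]$ is $k$-dimensional, and I would rewrite it as $V = L_{k-1}\oplus^{\perp}\spn(w)$ with $w$ a unit vector orthogonal to $L_{k-1}$. This reorthogonalisation is necessary because the fixed complement $L_{k-1}'$ is only an algebraic, not an orthogonal, complement. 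Since $w\perp u_1,\dots,u_{k-1}$, we have $w\in\spn(u_k,\dots,u_d)$, and a short Rayleigh-quotient computation shows that the inner maximum over $V\cap S$ equals $\langle w, Aw\rangle$, which satisfies $\langle w,Aw\rangle\ge\lambda^{(k)}$.

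For the equality itself I would argue by two inequalities. For ``$\ge$'', every admissible $[L_{k-1},v]$ (with $v\notin L_{k-1}$) is in particular a $k$-dimensional subspace, so the right-hand minimum is taken over a subfamily of the subspaces appearing in \Cref{thm:Courant-Fischer} and is therefore at least $\lambda^{(k)}$; alternatively this follows directly from $\langle w,Aw\rangle\ge\lambda^{(k)}$ above. For ``$\le$'', I would exhibit one good competitor: writing $u_k=\ell+v$ with $\ell\in L_{k-1}$ and $v\in L_{k-1}'$ via the direct-sum decomposition, one has $v\neq 0$ (else $u_k\in L_{k-1}$) and $[L_{k-1},v]=[L_{k-1},u_k]=L_k$. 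Over $L_k\cap S$ the quadratic form is $\sum_{i\le k}\lambda^{(i)}c_i^2$ on unit vectors $\sum_{i\le k}c_iu_i$, whose maximum is $\lambda^{(k)}$, attained at $u_k$. Thus the right-hand minimum is $\le\lambda^{(k)}$, and equality follows.

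For the ``moreover'' statement, let $v_k$ be a minimiser, $V_k=[L_{k-1},v_k]$, and $w_k\in\spn(u_k,\dots,u_d)$ its associated orthonormal direction. Minimality gives $\langle w_k,Aw_k\rangle=\lambda^{(k)}$; expanding $w_k=\sum_{i\ge k}b_iu_i$ this reads $\sum_{i\ge k}\lambda^{(i)}b_i^2=\lambda^{(k)}\sum_{i\ge k}b_i^2$, forcing $b_i=0$ whenever $\lambda^{(i)}>\lambda^{(k)}$, i.e.\ $w_k$ lies in the $\lambda^{(k)}$-eigenspace. The same Rayleigh computation identifies the maximisers of $\langle u,Au\rangle$ over $V_k\cap S$ as exactly the unit vectors of $V_k$ lying in that eigenspace; since $u_k$ lies in the eigenspace with $\langle u_k,Au_k\rangle=\lambda^{(k)}$, it remains only to verify $u_k\in V_k$.

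This last verification is the main obstacle, and it is where the multiplicity of $\lambda^{(k)}$ enters. When $\lambda^{(k)}$ is simple, the eigenspace is one-dimensional, so $w_k=\pm u_k$, whence $u_k\in V_k$ and is a maximiser --- which is the regime relevant to the application, where the leading eigenvalue is isolated. In the degenerate case one only obtains that \emph{some} unit vector of the eigenspace inside $V_k$ maximises the quotient; identifying it with $u_k$ then requires choosing the eigenbasis of the $\lambda^{(k)}$-eigenspace compatibly with $w_k$, which I would either impose as a harmless normalisation or circumvent by restricting the conclusion to the simple-eigenvalue case.
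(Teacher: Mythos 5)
Your proof is correct, and for the min--max equality it coincides in substance with the paper's: the paper proves everything in a single four-line chain --- Courant--Fischer, then restriction to the subfamily $\{[L_{k-1},v] \mid v\in L_{k-1}'\}\subseteq V_k$, then the explicit competitor $v_k\in L_{k-1}'$ with $[L_{k-1},v_k]=L_k$, then diagonalising the form over $L_k\cap S$ to get maximum $\lambda^{(k)}$ attained at $t_k=1$ --- which is exactly your pair of inequalities, with your decomposition $u_k=\ell+v$, $\ell\in L_{k-1}$, $v\in L_{k-1}'$, making explicit the competitor whose existence the paper merely asserts. Where you genuinely depart from the paper is the ``moreover'' clause. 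The paper establishes it only for its \emph{constructed} minimiser (the $v_k$ spanning $L_k$ together with $L_{k-1}$): equality at $t_k=1$ shows $u_k$ maximises over $L_k\cap S$, and nothing is said about other minimisers. You instead attack the literal statement for an \emph{arbitrary} minimiser via the orthogonal re-decomposition $[L_{k-1},v]=L_{k-1}\oplus^{\perp}\mathrm{span}(w)$ (needed, as you say, because $L_{k-1}'$ is only an algebraic complement); since $\langle u_i, Aw\rangle=\lambda^{(i)}\langle u_i,w\rangle=0$ for $i<k$, the cross terms vanish and your computation correctly yields that the inner maximum equals $\langle w,Aw\rangle$ and that $w_k$ lies in the $\lambda^{(k)}$-eigenspace at any minimiser. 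The obstruction you identify is real: if $\lambda^{(k)}=\lambda^{(k+1)}$, taking $v$ to be the $L_{k-1}'$-component of $u_{k+1}$ gives a minimiser with $[L_{k-1},v]=[L_{k-1},u_{k+1}]\not\ni u_k$, so the ``moreover'' as literally stated fails for degenerate $\lambda^{(k)}$, and your fixes (assuming simplicity, or re-choosing the eigenbasis of the $\lambda^{(k)}$-eigenspace so that $u_k=w_k$) are the right ones --- the paper's formulation survives only under the implicit reading that $v_k$ denotes its specific constructed minimiser, so your analysis is a genuine sharpening rather than a gap. Two small remarks: both you and the paper implicitly exclude $v=0\in L_{k-1}'$, for which $[L_{k-1},0]=L_{k-1}$ gives inner maximum $\lambda^{(k-1)}$ and would spoil the identity when $\lambda^{(k-1)}<\lambda^{(k)}$ (your parenthetical $v\notin L_{k-1}$ covers this; the paper's claim that $[L_{k-1},v]\in V_k$ for $v\in L_{k-1}'$ silently assumes it); and in the application \eqref{eq:minmax} the relevant eigenvalue is $\lambda^{(2)}_p$, whose simplicity is not implied by the isolatedness of the \emph{first} eigenvalue, so your simplicity caveat there is an additional hypothesis rather than a consequence of the cited results.
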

\begin{proof}
    In the following computation, line by line, we make use of the following facts:
    \begin{enumerate}
    \setlength{\itemsep}{-3pt}
        \item The Courant--Fischer theorem.
        \item For $v\in L_{k-1}'$ one has $[L_{k-1},v] \in V_k$.  
        \item Estimating the minimum from above by setting $v = v_k \in L_{k-1}'$ such that $[L_{k-1},v_k] = L_k$. 
        \item Using that the eigenvectors $u_i$ are mutually orthogonal and setting $u = \sum_{i=1}^k t_i u_i$ with $\sum_{i=1}^k t_i^2=1$.
    \end{enumerate}
    The computation
    \begin{align*}
        \lambda^{(k)} &= \min\left\{\max \left\{ \< u, Au\> \mid u\in V\cap S \right\} \mid V\in V_k \right\} \\
        &\le \min \left\{\max \left\{ \<u, Au\> \mid u \in [L_{k-1}, v] \cap S \right\} \mid v\in L_{k-1}' \right\} \\
        &\le \max \left\{ \<u, Au\> \mid u \in L_k \cap S \right\} \\
        &= \max \left\{ \textstyle \sum_{i=1}^k \lambda^{(i)} t_i^2 \, \mid \, \sum_{i=1}^k t_i^2 = 1 \right\} \le \left\{ \textstyle \sum_{i=1}^k \lambda^{(k)} t_i^2 \, \mid \, \sum_{i=1}^k t_i^2=1 \right\} = \lambda^{(k)}
    \end{align*}
    yields the claim. Equality holds in the last inequality for $t_k=1$ and $t_i=0$ for $i\neq k$, showing that a maximiser is indeed~$u=u_k$.
\end{proof}

\subsection{A lemma for the proof of \torpdf{\Cref{thm:cheegerdynamic}}{}}

\begin{lem}\label{lem:introducingabs} Let $M\in \mathbb{R}^d$
    compact and $w\in C^{\infty}_0(M)$ and define
    \begin{align}
        A(t) := \{x\mid w(x) > t\} \\
        B(t) := \{x\mid |w(x)| >t \}.
    \end{align}
    \begin{enumerate}[a)]
        \item For almost all $t$ we have $\partial A(t) = w^{-1}(t)$
        \label{eq:borderispreimage}
        \item For almost all $t$ we have
            $\partial B(t) = \partial A(t) \, \dot\cup \, \partial A(-t)$
        \item The following integrals coincide:
    \[
        \int_{-\infty}^\infty \ell_{d-1}(\partial A(t))\ dt=
        \int_0^\infty \ell_{d-1}(\partial B(t))dt.
    \]
    \end{enumerate}
    In all statements the boundary is taken within $M$.
\end{lem}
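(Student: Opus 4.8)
The plan is to treat the three parts in order, with Sard's theorem supplying the ``almost all $t$'' qualifier throughout. For part (a) I would first record the inclusion that holds for \emph{every} $t$: since $A(t)=\{w>t\}$ is open, its relative boundary satisfies $\partial A(t)=\overline{A(t)}\setminus A(t)$, and continuity of $w$ forces $w\ge t$ on $\overline{A(t)}$ and $w\le t$ off $A(t)$, so $\partial A(t)\subseteq w^{-1}(t)$. Because $w\in C_0^\infty(M)$ vanishes near $\partial M$, for $t\neq 0$ this level set lies in the interior of $M$, so no boundary-of-$M$ effects intervene. For the reverse inclusion I would invoke Sard's theorem: for almost every $t$ the value $t$ is regular, so $\nabla w\neq 0$ on $w^{-1}(t)$ and, by the implicit function theorem, near any $x_0\in w^{-1}(t)$ the function $w$ takes values both above and below $t$; hence $x_0\in\partial A(t)$, giving $w^{-1}(t)\subseteq\partial A(t)$ and thus equality for a.e.\ $t$.

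For part (b) I would fix $t>0$ such that both $t$ and $-t$ are regular values; by Sard this excludes only a null set of $t$. For such $t$ one has the disjoint decomposition $B(t)=\{w>t\}\cup\{w<-t\}$ into two open sets whose closures are disjoint (a point cannot satisfy $w\ge t>0$ and $w\le -t<0$ simultaneously). Consequently the boundary of the union splits as $\partial B(t)=\partial\{w>t\}\cup\partial\{w<-t\}$, a disjoint union. The first piece is $\partial A(t)=w^{-1}(t)$ by part (a). For the second, the key observation is that at a regular value $-t$ the level set $w^{-1}(-t)$ is the \emph{common} relative boundary of the two complementary open regions $\{w<-t\}$ and $\{w>-t\}=A(-t)$, by the argument of part (a) applied to the sublevel set. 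Hence $\partial\{w<-t\}=w^{-1}(-t)=\partial A(-t)$, and combining gives $\partial B(t)=\partial A(t)\,\dot\cup\,\partial A(-t)$, with the two sets disjoint because $w^{-1}(t)$ and $w^{-1}(-t)$ are disjoint for $t>0$.

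Part (c) then follows by integrating the identity from (b). I would first note that $\ell_{d-1}(\partial A(t))\le\ell_{d-1}(w^{-1}(t))$ always, and that $t\mapsto\ell_{d-1}(w^{-1}(t))$ is integrable with integral $\int_M|\nabla w|$ by the level-set form of the coarea formula, so all integrals in sight are finite and the null set of critical values contributes nothing. Taking $(d-1)$-measures in the disjoint union of part (b) gives $\ell_{d-1}(\partial B(t))=\ell_{d-1}(\partial A(t))+\ell_{d-1}(\partial A(-t))$ for a.e.\ $t>0$; integrating over $t\in(0,\infty)$ and substituting $s=-t$ in the second term converts $\int_0^\infty\ell_{d-1}(\partial A(-t))\,dt$ into $\int_{-\infty}^0\ell_{d-1}(\partial A(s))\,ds$, which combines with $\int_0^\infty\ell_{d-1}(\partial A(t))\,dt$ to yield the full integral $\int_{-\infty}^\infty\ell_{d-1}(\partial A(t))\,dt$.

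I expect the main obstacle to be part (b): the delicate points are verifying that the two pieces of $B(t)$ have genuinely disjoint closures, so that no boundary mass is created or cancelled at their interface, and correctly identifying the boundary of the \emph{sublevel} set $\{w<-t\}$ with $\partial A(-t)$ rather than with the boundary of some superlevel set. Both hinge on restricting to simultaneously regular values $\pm t$, so coordinating the two applications of Sard's theorem is the crux.
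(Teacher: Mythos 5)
Your proposal is correct and follows essentially the same route as the paper: continuity gives $\partial A(t)\subseteq w^{-1}(t)$, Sard's theorem plus the nonvanishing-gradient (values above and below the level) argument gives the reverse inclusion for a.e.\ $t\neq 0$, part (b) reduces to (a) applied at the simultaneously regular values $\pm t$, and (c) follows by integrating and substituting $s=-t$. Your reorganisation of (b) via the disjoint-closure splitting $\partial(\{w>t\}\,\dot\cup\,\{w<-t\})=\partial\{w>t\}\,\dot\cup\,\partial\{w<-t\}$ is a clean packaging of the same double inclusion the paper verifies directly, and is valid since $\overline{\{w>t\}}\subseteq\{w\ge t\}$ and $\overline{\{w<-t\}}\subseteq\{w\le -t\}$ are disjoint for $t>0$.
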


\begin{proof}
    The inclusion $\partial A(t) \subseteq w^{-1}(t)$ follows
    for all $t$ by continuity of $w$.
    On the other hand let $x\in w^{-1}(t)$. We may assume $\nabla w(x) \neq 0$
    as by Sard's theorem $w^{-1}(t)$ contains no critical points
    for almost all $t$.
    We may also assume $t\neq 0$ (as $\{0\}$ is a
    null set), implying $x\in M^\circ$ and
    in particular $x+s\nabla w(x)\in M$ for small $s$.
    Now
    \begin{equation}
         w(x+s\nabla w(x)) - w(x) = s|\nabla w(x)|^2 + \mathcal{O}(s^2)
        \label{eq:taylorargument}
    \end{equation}
    is positive for small
    $s>0$ and negative for small $s<0$ and it follows that
    $x\in \partial A(t)$. Hence, $\partial A(t) \supseteq w^{-1}(t)$
    is fulfilled, too and we have shown (a).
    Now this implies that for almost all $t$ we have
    \begin{equation}
        \partial B(t) = \partial A(t) \, \dot\cup \, \partial A(-t)
        \label{eq:disjointunion}
    \end{equation}
    in the following way: if $x\in \partial B(t)$ then, again by continuity of
    $w$ we must have $w(x) = \pm t$, which for almost
    all $t$ implies $x\in \partial A(t) \cup \partial A(-t)$
    by (\ref{eq:borderispreimage}). Conversely, if
    $x \in\partial A(\pm t)$ then almost surely $x\in\partial B(t)$,
    as there are values bigger and smaller than $w(x)$
    attained arbitrarily close to $x$ by the same construction as
    in \eqref{eq:taylorargument}. Finally, the sets $w^{-1}(t)$ and
    $w^{-1}(-t)$ are obviously disjoint for $t\neq 0$ and
    so are the sets $\partial A(t)$ and $\partial A(-t)$ for
    almost all $t$.
    This finishes the proof of (b).
    Finally, we can use this to conclude
    \begin{align}
        \int_0^\infty \ell_{d-1}(\partial B(t))dt &=
         \int_0^\infty \ell_{d-1}(\partial A(t)) + \ell_{d-1}(\partial A(-t))\ dt \\
        &= \int_0^\infty \ell_{d-1}(\partial A(t)) +
            \int_{-\infty}^0 \ell_{d-1}(\partial A(t))\ dt \\
        &= \int_{-\infty}^\infty \ell_{d-1}(\partial A(t)).
     \end{align}
\end{proof}

\bibliography{main}
\bibliographystyle{abbrv}

\end{document}